\def\rr{{\mathbb R}}
\def\rn{{{\rr}^n}}
\def\zz{{\mathbb Z}}
\def\nn{{\mathbb N}}
\def\fz{\infty}
\def\ccc{{\mathbb C}}
\def\cs{{\mathcal S}}
\def\az{\alpha}
\renewcommand\tilde{\widetilde}
\def\supp{{\rm{\,supp\,}}}
\def\ls{\lesssim}
\def\lz{\lambda}
\def\r{\right}
\def\lf{\left}
\def\bint{{\ifinner\rlap{\bf\kern.30em--}
\int\else\rlap{\bf\kern.35em--}\int\fi}\ignorespaces}
\def\sbint{{\ifinner\rlap{\bf\kern.32em--}
\hspace{0.078cm}\int\else\rlap{\bf\kern.45em--}\int\fi}\ignorespaces}
\def\dsup{\displaystyle\sup}
\newtheorem{theorem}{Theorem}[section]
\newtheorem{lemma}[theorem]{Lemma}
\newtheorem{proposition}[theorem]{Proposition}
\theoremstyle{definition}
\newtheorem{remark}[theorem]{Remark}
\newtheorem{definition}[theorem]{Definition}
\numberwithin{equation}{section}
\numberwithin{equation}{section}
\numberwithin{equation}{section}
\begin{document}

\arraycolsep=1pt

\title{\Large\bf   Herz-Type Hardy Spaces Associated with Ball Quasi-Banach Function Spaces
\footnotetext{\hspace{-0.35cm} {\it 2010 Mathematics Subject Classification}.
{Primary 42B30; Secondary 42B35, 46E30.}
\endgraf{\it Key words and phrases:} Herz-type Hardy space, ball quasi-Banach function space,
atom  decomposition, sublinear operator
\endgraf This project is supported by Natural Science Fundation of Xinjiang Province and NSFC (Nos. 2024D01C40, 12261083, 12161083).
 \endgraf $^\ast$\,Corresponding author
}}
\author{Aiting Wang, Wenhua Wang, Mingquan Wei and Baode Li$^\ast$}
\date{  }
\maketitle

\vspace{-0.8cm}

\begin{center}
\begin{minipage}{13cm}\small
{\noindent{\bf Abstract} \
Let $X$ be a ball quasi-Banach function space, $\alpha\in \mathbb{R}$ and $q\in(0,\infty)$. In this paper, the authors first introduce the Herz-type Hardy space $\mathcal{H\dot{K}}_{X}^{\alpha,\,q}({\mathbb {R}}^n)$, which is defined via the non-tangential grand maximal function. Under some mild assumptions on $X$, the authors establish the atomic decompositions of $\mathcal{H\dot{K}}_{X}^{\alpha,\,q}({\mathbb {R}}^n)$. As an application, the authors
 obtain
the boundedness of certain sublinear operators from $\mathcal{H\dot{K}}_{X}^{\alpha,\,q}({\mathbb {R}}^n)$ to $\mathcal{\dot{K}}_{X}^{\alpha,\,q}({\mathbb {R}}^n)$, where $\mathcal{\dot{K}}_{X}^{\alpha,\,q}({\mathbb {R}}^n)$ denotes the Herz-type space associated with ball
quasi-Banach function space $X$. Finally, the authors apply these results to three concrete function spaces: Herz-type Hardy spaces with variable
exponent, mixed Herz-Hardy spaces and Orlicz-Herz Hardy spaces, which belong to the family of Herz-type Hardy spaces associated with ball quasi-Banach function spaces.
}
\end{minipage}
\end{center}

\section{Introduction}

\quad The theory of Herz spaces, pioneered by Beurling and Herz, is of importance in the study of convolution algebra and Fourier multipliers (refer to \cite{b85, b64,h68}). The exploration of Herz-type Hardy spaces, which originated in the late 1980s to provide a natural analog to the classical Hardy space, has become a highly captivating subject of research in Harmonic Analysis.
 In 1989, Chen et al. \cite{c89} and Garc\'{i}a-Cuerva \cite{cc89} introduced what are now known as non-homogeneous Herz-type Hardy spaces in $n=1$  (see \cite{c89}) and $n\geq2$ (see \cite{cc89}), and subsequently established their central atomic decomposition.  After that, Lu et al. delved into homogeneous Herz-type Hardy spaces and conducted a systematic study of Herz-type Hardy spaces with general indices.  Their work showed that Herz-type Hardy spaces are an appropriate localized version of the classical Hardy spaces and possess several real-variable properties, such as maximal function characterizations, atomic and molecular decompositions (see monograph \cite{l08} for details). Thanks to the groundbreaking work of Chen, Garc\'{i}a-Cuerva and Lu et al., the extension of Herz-type Hardy spaces has been extensively studied, and their real-variable theory has been significantly developed. Examples include weighted Herz-type Hardy spaces \cite{l97,q20}, Herz-type Hardy spaces with variable exponents \cite{wl12}, and  Herz-Morrey-Hardy spaces with variable exponents \cite{xx15}. Most recently, Zhao et al. \cite{z22}  studied mixed Herz-Hardy spaces.


On the other hand,  in 2017, Sawano et al. \cite{s17} first introduced the ball quasi-Banach
function space $X$, which generalized  the Banach function space in \cite{b88}. The ball quasi-Banach function spaces encompass many well-known function spaces, including
weighted Lebesgue spaces, Morrey spaces, mixed-norm Lebesgue spaces and  Orlicz spaces, etc (see \cite[Section 7]{s17} and \cite[Section 5]{wy19}).  It's noteworthy that not all of these spaces qualify as quasi-Banach function spaces (for further details, refer to \cite{s17,z20}). Therefore, this peculiarity actually broadens the scope of applications for studying , as it accommodates more diverse functional behaviors. Moreover, Sawano et al. \cite{s17} not only introduced but also thoroughly explored the Hardy space $H_X(\rn)$
associated with $X$.  Under the assumptions that the Fefferman-Stein vector-valued maximal inequality holds on $X$ and the powered Hardy-Littlewood maximal operator is bounded on the associated space of $X$ , they obtained the atom decompositions of $H_X(\rn)$.
Indeed, the theory of the Hardy space
$H_X(\rn)$ provides an remarkably suitable framework. This framework serves as a unifying force,  bringing together the theories of numerous variants of Hardy spaces that had been previously explored. Classical Hardy spaces, mixed-norm Hardy spaces, variable Hardy spaces, and Orlicz Hardy spaces all find their place within this  construct (see \cite[Chapter 7]{s17} for the details). Subsequently, Zhang et al. \cite{z20} and
Wang et al. \cite{wy19} introduced the weak Hardy space $WH_X(\rn)$ associated with $X$ and developed
its complete real-variable theory.  Very recently, Wei et al. \cite{w22} considered  Herz-type space associated with  $X$. For more recent developments about ball quasi-Banach function spaces, we refer the reader to \cite{c20,is17,t21,yh22}.

Inspired by the groundbreaking work of Herz et al. \cite{h68}, Lu et al. \cite{l08} and  Sawano et al. \cite{s17}, it is natural and interesting to
ask whether
 there exist a unifying framework for Herz-type Hardy spaces  which encompasses the classical Herz-type Hardy spaces, the Herz-type Hardy spaces with variable
exponent and the mixed Herz-type Hardy spaces as special cases. In this paper, we shall answer this
problem affirmatively  by introducing  new Herz-type Hardy space $\mathcal{H\dot{K}}_{X}^{\alpha,\,q}({\mathbb {R}}^n)$ associated with ball
quasi-Banach function space $X$.

To be precise, this article is organized as follows.

In Section 2, we recall some notions concerning the ball (quasi)-Banach function space $X$, and introduce the definitions of Herz-type Hardy spaces $\mathcal{H}\dot{\mathcal{K}}_{X}^{\alpha,\,q}(\rn)$ associated with the ball quasi-Banach function spaces $X$.  Section 3 is devoted to establishing the atomic decomposition of  $\mathcal{H}\dot{\mathcal{K}}_{X}^{\alpha,\,q}(\rn)$ (see Theorem \ref{t3.1} below). As an application of the atomic characterization of  $\mathcal{H}\dot{\mathcal{K}}_{X}^{\alpha,\,q}(\rn)$, we obtain the boundedness of certain sublinear operators from $\mathcal{H}\dot{\mathcal{K}}_{X}^{\alpha,\,q}(\rn)$ to $\dot{\mathcal{K}}_{X}^{\alpha,\,q}(\rn)$ (see Theorem \ref{t5.1} below). In Section 4, we apply the results obtained in this article to three specific examples: Herz-type Hardy spaces with variable exponents (detailed in Subsection \ref{s4.1}), mixed Herz-type Hardy spaces (detailed in Subsection \ref{s4.2}) and  Herz-Orlicz Hardy spaces (detailed in Subsection \ref{s4.3}). It is particularly worthy of note that the results derived in this article possess  a remarkable degree of generality. To be more specific, the atomic decompositions of $\mathcal{H}\dot{\mathcal{K}}_{X}^{\alpha,\,q}(\mathbb{R}^n)$ (see Theorem \ref{t3.1} below) and the boundedness of certain sublinear operators from $\mathcal{H}\dot{\mathcal{K}}_{X}^{\alpha,\,q}(\mathbb{R}^n)$ to $\dot{\mathcal{K}}_{X}^{\alpha,\,q}(\mathbb{R}^n)$ (see Theorem \ref{t5.1} below) encompass the corresponding results of the Herz-type Hardy spaces with variable exponent (see \cite[Theorems 2.1 and 2.2]{wl12}) and the mixed Herz-Hardy spaces (see \cite[Theorems 3.1 and 5.1]{z22}) as special cases. Furthermore, our work seems pioneering in the investigation of Herz-Orlicz Hardy spaces, as this particular class of spaces has not been previously explored in the literature.

Finally, we make some conventions on notation. Let $\nn:=\{0,1,\, 2,\,\ldots\}$ and  $\zz_+:=\{1,\, 2,\,\ldots\}$.
For any $\az:=(\az_1,\ldots,\az_n)\in\nn_0^n$,
$|\az|:=\az_1+\cdots+\az_n$,
and $\partial^\az:=
(\frac{\partial}{\partial x_1})^{\az_1}\cdots
(\frac{\partial}{\partial x_n})^{\az_n}$.
Throughout the whole paper, we denote by $C$ a positive
constant which is independent of the main parameters, but it may vary from line to line.
The symbol $D\ls F$ means that $D\le CF$. If $D\ls F$ and $F\ls D$, we then write $D\sim F$.
For any set $E \subset \rn$, we use $E^\complement$ to denote the set $\rn\setminus E$.
Let $\cs(\rn)$ be the space of Schwartz functions, $\cs'(\rn)$ the space of tempered distributions,
and $C^N(\rn)$ the space of  continuously differentiable functions of order $N$.



\section{Preliminaries}
\hskip\parindent
In this section, we recall the definition of the ball quasi-Banach
function space $X $ introduced in \cite{s17}. Following this, we introduce the definitions of the  Herz-type Hardy spaces $\mathcal{H}\dot{\mathcal{K}}_{X}^{\alpha,\,q}(\rn)$  associated with the ball quasi-Banach function spaces $X$.

 We always use the symbol $\mathfrak{M}(\rn)$ to
denote the set of all measurable functions on $\rn$. For any
$x\in\rn$ and $r\in(0,\,\infty)$, let $B(x,\,r):=\{y\in \rn :|x-y|<r\}$ and
\begin{align*}
\mathfrak{{B}}:=\{B(x,\,r): x\in\rn \ \ {\text{and}}\  \ r\in(0,\,\infty)\}.
\end{align*}

 \begin{definition}\label{d2.1xx}
 A quasi-Banach space $X\subset\mathfrak{M}(\rn)$ is called a {\it ball quasi-Banach function space} if it
satisfies
\begin{enumerate}
\item[\rm{(i)}]
$\|f\|_X=0$ implies that $f=0$ almost everywhere;
\item[\rm{(ii)}] $|g|\leq|f|$ almost everywhere implies that $\|g\|_X\leq\|f\|_X$;
\item[\rm{(iii)}] $0\leq f_m\uparrow f$ as $m\rightarrow\infty$ almost everywhere implies that $\|f_m\|_X\uparrow\|f\|_X$ as $m\rightarrow\infty$;
\item[\rm{(iv)}] $B\in\mathfrak{B}$ implies that $\chi_B\in X$.
\end{enumerate}
 \end{definition}

Furthermore, a ball quasi-Banach function space $X$ is called a ball Banach function
space if the norm of $X$ satisfies that for any $f,\,g\in X$,
\begin{align*}
\|f+g\|_{X}\leq\|f\|_{X}+\|g\|_{X}
\end{align*}
and, for any $B\in\mathfrak{B}$, there exists a positive constant $C_{(B)}$, depending on $B$, such that,
for any $f\in X$,
\begin{align*}
\int_{B}|f(x)|dx \leq C_{(B)}\|f\|_X.
\end{align*}

The associate space $X'$ of any given ball Banach function space $X$ is defined as follows ( see \cite[
Chapter 1, Section 2]{b88} or \cite{s17} for more details).
\begin{definition}
 For any given ball Banach function space $X$, its {\it associate space (also called the K\"{o}the
dual space)} $X'$ is defined by setting
$$X':=\lf\{f\in\mathfrak{M}(\rn):\|f\|_{X'}<\infty\r\},$$
where, for any $f\in\mathfrak{M}(\rn)$,
$\|f\|_{X'}:=\sup\lf\{\|fg\|_{L^1}:g\in X, \|g\|_X=1\r\}$
and $\|\cdot\|_{X'}$ is called the {\it associate norm} of $\|\cdot\|_{X}$.
 \end{definition}

 \begin{remark}\label{r2.3a}
 \begin{enumerate}
\item[\rm{(i)}]
By \cite[Proposition 2.3]{s17}, we obtain that, if $X$ is a ball Banach function
space, then its associate space $X'$ is also a ball Banach function space.
\item[\rm{(ii)}]  As was mentioned in \cite[Section 7]{s17} and \cite[Section 5]{wy19}, the family of ball quasi-Banach function spaces includes  Morrey spaces \(M_{r}^{p}(\mathbb{R}^{n})\)(see \cite{m38}),  mixed-norm Lebe-\\sgue spaces $L^{\vec{p}}(\rn)$ (see \cite{b61}), variable Lebesgue spaces $L^{p(\cdot)}(\mathbb{R}^{n})$ (see \cite{dhh11}), weighted Lebesgue spaces $L^{p}_\omega(\rn)$ (see \cite{m72}) and Orlicz spaces \(L^{\Phi}(\mathbb{R}^n)\) (see \cite{r91}), which are not necessary to be Banach
function spaces.
\end{enumerate}
\end{remark}

\begin{lemma}\rm{\cite[ Lemma 2.6]{z20}}\label{l20}
 Every ball Banach function space $X$ coincides with its second associate
space $X''$. In other words, a function $f$ belongs to $X$ if and only if it belongs to $X''$
and, in that case,
$$\|f\|_{X}=\|f\|_{X''}.$$
\end{lemma}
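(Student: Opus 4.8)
The plan is to view this as the ``ball'' analogue of the classical Lorentz--Luxemburg theorem and to reduce it to that theorem, localised to individual balls, together with a monotone exhaustion of $\rn$. The easy half is immediate: from the very definition of $\|\cdot\|_{X'}$ one has the H\"older-type inequality $\|fg\|_{L^1}\le\|f\|_X\,\|g\|_{X'}$ for all $f\in X$ and $g\in X'$, whence $\|f\|_{X''}=\sup\{\|fg\|_{L^1}:g\in X',\ \|g\|_{X'}=1\}\le\|f\|_X$; in particular $X\subset X''$. By Remark \ref{r2.3a}(i) applied twice, $X'$ and hence $X''$ are ball Banach function spaces, so $\|\cdot\|_{X''}$ obeys (i)--(iv) of Definition \ref{d2.1xx}, in particular the lattice property and the Fatou property; I shall use these without further comment.

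For the reverse inequality $\|f\|_X\le\|f\|_{X''}$ I would first treat $f$ supported in a fixed ball $B$. The point is that forming the associate space commutes with restriction to $B$: if $g=g\chi_B$, then since $\|h\chi_B\|_X\le\|h\|_X$ by property (ii), the supremum $\|g\|_{X'}=\sup\{\|gh\|_{L^1}:h\in X,\ \|h\|_X=1\}$ is unchanged if one restricts $h$ to be supported in $B$; thus $\|g\|_{X'}$ equals the associate norm of $g$ computed inside the finite measure space $(B,dx)$ for the space $X_B:=\{h\in X:h=h\chi_B\}$ normed by $\|\cdot\|_X$. Iterating, $\|f\|_{X''}=\|f\|_{(X_B)''}$ for $f=f\chi_B$. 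Now $X_B$ is a Banach function space over $(B,dx)$ in the classical sense of \cite{b88}: properties (i)--(iii) pass down verbatim, the classical saturation requirement $\chi_E\in X_B$ for every measurable $E\subset B$ follows from $\chi_E\le\chi_B\in X$, and the local-integrability requirement $\int_E|h|\,dx\le C\|h\|_X$ follows from the defining inequality $\int_B|h|\,dx\le C_{(B)}\|h\|_X$ of a ball Banach function space. Hence the classical Lorentz--Luxemburg theorem (see \cite[Chapter~1]{b88}) gives $(X_B)''=X_B$ with equal norms, so $\|f\|_X=\|f\|_{X_B}=\|f\|_{(X_B)''}=\|f\|_{X''}$ for every $f=f\chi_B$.

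To finish, let $f\in\mathfrak{M}(\rn)$ be arbitrary and set $f_k:=\min\{|f|,k\}\chi_{B(0,k)}$, so that $0\le f_k\uparrow|f|$ almost everywhere; each $f_k$ is bounded and supported in a ball, hence $f_k\le k\chi_{B(0,k)}\in X$ lies in $X$ and in $X''$, and the ball case gives $\|f_k\|_X=\|f_k\|_{X''}$. Applying the Fatou property (iii) of $X$ and of $X''$, together with $\|f\|_X=\| |f| \|_X$ and the analogous identity for $X''$, we obtain $\|f\|_X=\lim_{k\to\infty}\|f_k\|_X=\lim_{k\to\infty}\|f_k\|_{X''}=\|f\|_{X''}$ in $[0,\infty]$; in particular $\|f\|_X<\infty$ if and only if $\|f\|_{X''}<\infty$, i.e.\ $f\in X\iff f\in X''$, which is the assertion.

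The step I expect to be the main obstacle is the localisation in the second paragraph --- verifying that the associate norm may genuinely be computed with $B$-supported test functions, and that $X_B$ really satisfies \emph{all} the classical Banach-function-space axioms on $(B,dx)$; this is exactly where the two features of a ball (quasi-)Banach function space beyond (i)--(iii), namely $\chi_B\in X$ and $\int_B|h|\le C_{(B)}\|h\|_X$, are indispensable. If one would rather not invoke the classical theorem as a black box, the ball-level inequality $\|f\|_X\le\|f\|_{X''}$ can instead be proved by a Hahn--Banach separation in $L^\infty(B)=(L^1(B))^\ast$: assuming $\|f\|_{X''}<1<\|f\|_X$ with $0\le f=f\chi_B$ bounded, one separates $f$ from the convex balanced set $\{h\in L^\infty(B):\|h\|_X\le 1\}$ by some $g\in L^1(B)$, normalises $g$ in $X'$, and uses $\sup\{\int_B|h||g|:\|h\|_X\le 1\}=\|g\|_{X'}$ to contradict $\int_B f|g|\le\|f\|_{X''}<1$; here the delicate point is the weak-$\ast$ closedness of that convex set, which follows from the Fatou property via the Krein--Smulian theorem and the separability of $L^1(B)$.
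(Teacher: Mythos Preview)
The paper does not give its own proof of this lemma; it is quoted directly from \cite[Lemma~2.6]{z20} and used as a black box. Your argument is correct and follows the standard route for results of this type: restrict to a fixed ball $B$, observe that the two defining features of a ball Banach function space beyond axioms (i)--(iii)---namely $\chi_B\in X$ and $\int_B|\cdot|\,dx\le C_{(B)}\|\cdot\|_X$---are exactly what is needed to make $X_B$ a classical Banach function space over the finite measure space $(B,dx)$ in the sense of \cite{b88}, apply the classical Lorentz--Luxemburg theorem there, and then exhaust $\rn$ by balls via the Fatou property (iii) on both $X$ and $X''$. Your check that taking associates commutes with restriction to $B$ is the one place requiring care, and you have handled it correctly; the Hahn--Banach alternative you sketch at the end is viable but unnecessary once the classical theorem is invoked.
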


Throughout this paper, always let $B_k:=\{x\in\rn:|x|\leq2^k\}$ and $D_k:=B_k\setminus B_{k-1}$ for $k\in\mathbb{Z}$. Denote briefly the characteristic function $\chi_k$: =$\chi_{D_k}$ for any $k\in\mathbb{Z}$. For any quasi-Banach space $X$, the space $L^X
_{\text{loc}}(\rn)$ consists of all functions $f\in\mathfrak{M}(\rn)$
such that $f\chi_{F}\in X$ for any compact subset $F\subset \rn$. Now we recall  the definition of  the {{ homogeneous Herz-type space}} $\dot{\mathcal{K}}_{X}^{\alpha,\,q}(\rn)$.

\begin{definition}\rm{\cite[Definition 2.5]{w22}}
Let $X$ be a ball quasi-Banach function space,  $\alpha\in \mathbb{R}$ and $q\in(0,\infty]$.
The {\it{homogeneous Herz-type space}}  $\dot{\mathcal{K}}_{X}^{\alpha,\,q}(\rn)$  is defined by setting,
\begin{eqnarray*}
\dot{\mathcal{K}}_{X}^{\alpha,\,q}(\rn):=\lf\{f\in L^X
_{\text{loc}}(\rn\setminus\{0\}):\|f\|_{\dot{\mathcal{K}}_{X}^{\alpha,\,q}(\rn)}<\infty\r\},
\end{eqnarray*}
where
\begin{eqnarray*}
\|f\|_{\dot{\mathcal{K}}_{X}^{\alpha,\,q}(\rn)}:=\lf\{\sum_{k\in\zz}2^{k\alpha q}\lf\|f\chi_k\r\|
_{X}^q\r\}^{1/q}.
\end{eqnarray*}

Here, there is the usual modification when $q=\infty$.
\end{definition}

 \begin{remark}\label{r2.6a}
When \(X = L^{p}(\mathbb{R}^{n})\) with \(p \in (0, \infty)\), the space \(\dot{\mathcal{K}}_{X}^{\alpha, q}(\mathbb{R}^{n})\)  is reduced to the classical homogeneous Herz space \(\dot{\mathcal{K}}_{{p}}^{\alpha, q}(\mathbb{R}^{n})\) (see \cite[Definition 1.1.1]{l08}). In this specific case, we have \(\dot{K}_{q}^{0, q}(\mathbb{R}^{n}) = L^{q}(\mathbb{R}^{n})\) and \(\dot{K}_{q}^{\alpha, q}(\mathbb{R}^{n}) = L^{q}(\mathbb{R}^{n}, |x|^{\alpha q})\) for any \(\alpha \in \mathbb{R}\), where \(L^{q}(\mathbb{R}^{n}, |x|^{\alpha q})\) is the Lebesgue space \(L^{q}(\mathbb{R}^{n})\) ) with power weight \(|x|^{\alpha q}\). Consequently, the Herz spaces can be regarded as a natural generalization of the Lebesgue spaces with power weights.
\end{remark}

In what follows,  we introduce the definitions of the homogeneous Herz-type Hardy spaces $\mathcal{H}\dot{\mathcal{K}}_{X}^{\alpha,\,q}(\rn)$ associated with ball quasi-Banach function spaces $X$ (shortly  Herz-type Hardy space associated with ball quasi-Banach function spaces $X$),  via the non-tangential grand maximal function $M_N(f)$.

A $C^\infty$ function $\varphi$ is said to belong to the Schwartz class $\cs$, if
for all multi-indices $\alpha$ and $\beta$, we have
$\|\varphi\|_{\alpha,\,\beta}:=\dsup_{x\in\rn}|x^\alpha\partial^\beta\varphi(x)|<\infty$.
The dual space of $\cs$, namely, the space of all tempered distributions on $\rn$ equipped with the weak-$\ast$
topology, is denoted by $\cs'$. For any $f\in\cs'$, let $M_Nf$ be the non-tangential grand maximal function of $f$ defined by
\begin{align*}
M_{N}(f)(x):=\sup_{\varphi\in \cs_N}M_{\varphi}(f)(x),
\end{align*}
where $\cs_N:=\{\varphi\in\cs:\ \dsup_{|\alpha|,\,|\beta|\leq N}\|\varphi\|_{\alpha,\,\beta}\leq1\} $ with $N>n+1$, and $M_{\varphi}(f)$ is the non-tangential maximal operator $M_{\varphi}(f)$ defined by
\begin{align*}
M_{\varphi}(f)(x):=\sup_{|y-x|<t}|f*\varphi_{t}(y)|
\end{align*}
with $\varphi_{t}(x):=t^{-n}\varphi(x/t)$.

\begin{definition}\label{da2.7}
 Let $X$ be a ball quasi-Banach function space,  $\alpha\in \mathbb{R}$, $q\in(0,\infty)$ and $N>n+1$.
The {\it{Herz-type Hardy spaces}} $\mathcal{H}\dot{\mathcal{K}}_{X}^{\alpha,\,q}(\rn)$  associated with ball quasi-Banach function spaces $X$  are defined  by setting,
\begin{eqnarray*}
\mathcal{H}\dot{\mathcal{K}}_{X}^{\alpha,\,q}(\rn):=\lf\{f\in \mathcal{S}^{'} :M_N(f)\in \dot{\mathcal{K}}_{X}^{\alpha,\,q}(\rn)\r\},
\end{eqnarray*}
where
$$
\|f\|_{\mathcal{H}\dot{\mathcal{K}}_{X}^{\alpha,\,q}(\rn)}=\lf\|M_N(f)\r\|_{\dot{\mathcal{K}}_{X}^{\alpha,\,q}(\rn)}.$$
\end{definition}

 \begin{remark}\label{r2.6b}
  It is known from Remark \ref{r2.3a} that Morrey spaces \(M_{r}^{p}(\mathbb{R}^{n})\),  mixed-norm Lebesgue spaces $L^{\vec{p}}(\rn)$, variable Lebesgue spaces $L^{p(\cdot)}(\mathbb{R}^{n})$, weighted Lebesgue spaces $L^{p}_\omega(\rn)$ and Orlicz-slice spaces \((E_{\Phi}^{r})_t(\mathbb{R}^n)\) are all members of the ball quasi-Banach function space $X$. For \(X\)  equal to any of these spaces, the Herz-type Hardy space $\mathcal{H}\dot{\mathcal{K}}_{X}^{\alpha,\,q}(\rn)$ becomes the corresponding variant of  Herz-type Hardy space, respectively. A particularly noteworthy observation is that when $X=L^{p(\cdot)}(\rn)$, $\mathcal{H}\dot{\mathcal{K}}_{X}^{\alpha,\,q}(\rn)$ is reduced to the Herz-type Hardy spaces with variable
exponent $\mathcal{H}\dot{\mathcal{K}}_{L^{p(\cdot)}}^{\alpha,\,q}(\rn)$ studied in \cite{dhh11}.  Similarly, when  $X=L^{\vec{p}}(\rn)$, $\mathcal{H}\dot{\mathcal{K}}_{X}^{\alpha,\,q}(\rn)$ is reduced to the mixed Herz-Hardy spaces $\mathcal{H}\dot{\mathcal{K}}_{L^{\vec{p}}}^{\alpha,\,q}(\rn)$  studied in \cite{z22}. Furthermore, when \(X = L^{\Phi}(\mathbb{R}^n)\), $\mathcal{H}\dot{\mathcal{K}}_{X}^{\alpha,\,q}(\rn)$  give rise to the  Orlicz-Herz Hardy space  $\mathcal{H}\dot{\mathcal{K}}_{L^{\Phi}}^{\alpha,\,q}(\rn)$, which doesn't seem to have been explored before. This demonstrates the wide-ranging generality of the research on Herz- type Hardy spaces $\mathcal{H}\dot{\mathcal{K}}_{X}^{\alpha,\,q}(\rn)$.
\end{remark}
Since there is no concrete norm expression for the ball Banach function space $X$, we need a weak assumption about the boundedness of  Hardy-Littlewood maximal operator on $X$ and its associated space $X'$.

For any $f\in L_{\mathrm{loc}}^1(\rn)$
(the set
of all locally integrable functions on $\rn$), {\it Hardy-Littlewood maximal function} $M_{\mathrm{HL}}(f)$ is defined by
\begin{align*}
M_{\mathrm{HL}}(f)(x):=\sup_{r>0}\frac{1}{|B(x,\,r)|}\int_{B(x,\,r)}|f(y)|\,dy,
\end{align*}
where $B(x,\,r)\in\mathfrak{{B}}$.\\\\
\textbf{ Assumption\,1.\,} Let $X$ be a ball Banach function space with and let $X'$ be the associate space of $X$. Then there exist  positive constants $C_1$ and $C_2$ such that for any $f\in X$,
\begin{align}\label{e3.2}
\|M_{\mathrm{HL}}(f)\|_X\leq C_1\|f\|_X
\end{align}
and
\begin{align*}
 \|M_{\mathrm{HL}}(f)\|_{X'}\leq C_2\|f\|_{X'}.
\end{align*}

\begin{lemma}\label{l3.1x}
Let $X$ be a ball Banach function space and satisfy  Assumption\,1. Then there exist  a positive constant $C$ and $0<\delta_{X}<1$
 such that for all balls $B\in\mathfrak{B}$ and all measurable sets $E\subset B$ ,
 \begin{enumerate}
\item[\rm{(i)}]$\frac{\|\chi_B\|_{X}}{\|\chi_E\|_{X}}\leq C\lf(\frac{|B|}{|E|}\r);$
\item[\rm{(ii)}]$\frac{\|\chi_E\|_{X}}{\|\chi_B\|_{X}}\leq C\lf(\frac{|E|}{|B|}\r)^{\delta_X}.$
\end{enumerate}
\end{lemma}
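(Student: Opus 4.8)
The plan is to prove (i) directly from the $X$--boundedness of the Hardy--Littlewood maximal operator, and to reduce (ii), via $X=X''$ (Lemma \ref{l20}), to a reverse H\"older estimate on the associate space $X'$ produced by a Rubio de Francia extrapolation. For (i), fix a ball $B=B(x_B,r)$ and a measurable set $E\subset B$ with $|E|>0$. For every $x\in B$ we have $E\subset B\subset B(x,2r)$, so $M_{\mathrm{HL}}(\chi_E)(x)\ge|E\cap B(x,2r)|/|B(x,2r)|=|E|/(2^n|B|)$; that is, $\frac{|E|}{2^n|B|}\chi_B\le M_{\mathrm{HL}}(\chi_E)$ pointwise on $\rn$. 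Applying Definition \ref{d2.1xx}(ii) and then \eqref{e3.2} gives $\frac{|E|}{2^n|B|}\|\chi_B\|_X\le\|M_{\mathrm{HL}}(\chi_E)\|_X\le C_1\|\chi_E\|_X$, which is (i) with $C=2^nC_1$.

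For (ii), since $X=X''$ by Lemma \ref{l20}, and $X'$ is a ball Banach function space by Remark \ref{r2.3a}(i), it suffices to bound $\int_E|g|\,dx$ uniformly over $g\in X'$ with $\|g\|_{X'}\le1$, and we may assume $g\ge0$. Put $f:=g\chi_B$ (so $\|f\|_{X'}\le1$ and $\supp f\subset B$), and, writing $C_2\ge1$ for the norm of $M_{\mathrm{HL}}$ on $X'$ (Assumption\,1), form $\mathcal Rf:=\sum_{k=0}^{\infty}(2C_2)^{-k}M_{\mathrm{HL}}^{\,k}f$, where $M_{\mathrm{HL}}^{\,k}$ is the $k$-fold iterate. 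Then $w:=\mathcal Rf$ satisfies: $g\le w$ on $E$; $\|w\|_{X'}\le2\|f\|_{X'}\le2$ (so $w\in X'$, hence $w$ is locally integrable and finite a.e.); and $M_{\mathrm{HL}}(w)\le2C_2\,w$, i.e.\ $w\in A_1$ with $[w]_{A_1}\le2C_2$. Consequently $w\in A_\infty$ with constant depending only on $n$ and $C_2$, and therefore $w$ satisfies a reverse H\"older inequality $\mathrm{RH}_{s_0}$ on balls for some $s_0\in(1,\infty)$ and with constant depending only on $n$ and $C_2$ --- all uniform in $g$ and $B$.

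With this weight in hand, using $g\le w$ on $E\subset B$ and H\"older's inequality, and then $\mathrm{RH}_{s_0}$ on $B$ together with the H\"older inequality $\int_B w\,dx=\|\chi_B w\|_{L^1}\le\|\chi_B\|_X\|w\|_{X'}\le2\|\chi_B\|_X$ for $X$ and $X'$, we obtain
$$
\int_E g\,dx\le\int_E w\,dx\le\lf(\int_B w^{s_0}\,dx\r)^{1/s_0}|E|^{1-1/s_0}\ls\|\chi_B\|_X\lf(\frac{|E|}{|B|}\r)^{1-1/s_0}.
$$
Taking the supremum over admissible $g$ gives $\|\chi_E\|_X\ls\|\chi_B\|_X(|E|/|B|)^{1-1/s_0}$, i.e.\ (ii) with $\delta_X:=1-1/s_0\in(0,1)$. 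The main obstacle is precisely (ii): the naive route --- applying (i) to $X'$ and dualizing --- only reproduces the reverse inequality, since $\|\chi_E\|_X\|\chi_E\|_{X'}$ is comparable to $|E|$ for balls but not for arbitrary measurable $E$. The Rubio de Francia step circumvents this by trading the arbitrary function $g$ for a genuine $A_1$ weight still controlled in $X'$, and the quantitative self-improvement $A_\infty\Rightarrow\mathrm{RH}_{s_0}$ is exactly what yields a strictly positive gain $\delta_X$; verifying the three properties of $w$ and the uniformity of $s_0$ is the only point requiring care.
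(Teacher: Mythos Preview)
Your argument is correct. Note, however, that the paper does not supply its own proof of this lemma: it simply cites \cite[Lemma~5]{i17} for part~(i) and \cite[Lemma~2.4]{w22} for part~(ii). Your proof of (i) via the pointwise bound $\tfrac{|E|}{2^n|B|}\chi_B\le M_{\mathrm{HL}}(\chi_E)$ and the $X$-boundedness of $M_{\mathrm{HL}}$ is the standard one and is almost certainly what the cited reference does. For (ii) you go through Rubio de Francia on $X'$ to produce an $A_1$ (hence $A_\infty$) majorant $w$ of the test function $g$, then invoke the quantitative reverse H\"older self-improvement to obtain the exponent $\delta_X=1-1/s_0$; this is a clean, self-contained route that makes transparent why boundedness of $M_{\mathrm{HL}}$ on the \emph{associate} space is what drives the gain of a positive power. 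Your closing remark correctly identifies the obstruction to the ``naive'' duality approach: the relation $\|\chi_E\|_X\|\chi_E\|_{X'}\sim |E|$ holds for balls (by Lemma~\ref{l3.5x} and the generalized H\"older inequality) but only the H\"older direction survives for arbitrary measurable $E$, so (i) applied to $X'$ cannot be dualized directly. In short, your proof supplies what the paper omits, and the method you chose is both natural and essentially the one underlying the literature the paper cites.
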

 The proof of Lemma \ref{l3.1x} (i)  is similar to
that of  \cite[Lemma 5]{i17}; we omit the details. Lemma \ref{l3.1x} (ii) is from \cite[Lemma 2.4]{w22}.
\begin{remark}\label{rc2.10}
From Remark \ref{r2.3a} (i) and Lemma \ref{l20}, it can be seen that the result of Theorem 2.8 still holds for the  associate space $X'$ of $X$. In this case, we denote the constant $\delta_{X}$ as $\delta_{X'}$.
\end{remark}
Throughout this paper, $\delta_{X}$ and $\delta_{X'}$ are the same as in Lemma \ref{l3.1x} and Remark \ref{rc2.10}, respectively.
The  boundedness of $M_{\mathrm{HL}}$
 on $\dot{\mathcal{K}}_{X}^{\alpha,\,q}(\rn)$ is given in \cite[Theorem 3.3]{w22} as follows.
\begin{lemma}\label{l3.4x'}
Let  $X$ be a ball Banach function space and satisfy  \eqref{e3.2} and  $ X' = X^*$, where $X^*$ denote the dual space of $X$, and let  $-n\delta_X<\alpha<n\delta_{X'}$ and $q\in(0,\infty)$. Then Hardy-Littlewood maximal function $M_{\mathrm{HL}}$ is bounded on $\dot{\mathcal{K}}_{X}^{\alpha,\,q}(\rn)$, respectively.
\end{lemma}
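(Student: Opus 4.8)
I would prove this directly (the statement is also contained in \cite[Theorem 3.3]{w22}), as follows. The plan is to reduce the boundedness of $M_{\mathrm{HL}}$ on $\dot{\mathcal{K}}_{X}^{\alpha,\,q}(\rn)$ to an almost-diagonal estimate for the sequence $\{2^{k\alpha}\|M_{\mathrm{HL}}(f)\chi_k\|_{X}\}_{k\in\zz}$ that defines $\|M_{\mathrm{HL}}(f)\|_{\dot{\mathcal{K}}_{X}^{\alpha,\,q}(\rn)}$, using the annular structure of the Herz norm. First I would record three elementary facts: by Lemma \ref{l3.1x}(i) applied to $D_k\subset B_k$, together with its analogue for $X'$ (valid by Remark \ref{rc2.10}) and $|B_k|\sim|D_k|\sim2^{kn}$, one has $\|\chi_{D_k}\|_{X}\sim\|\chi_{B_k}\|_{X}$ and $\|\chi_{D_k}\|_{X'}\sim\|\chi_{B_k}\|_{X'}$; and combining H\"{o}lder's inequality for the associate space with the pointwise bound $|B_k|^{-1}\int_{B_k}|g|\le\inf_{x\in B_k}M_{\mathrm{HL}}(g)(x)$ and \eqref{e3.2}, one obtains $\|\chi_{B_k}\|_{X}\|\chi_{B_k}\|_{X'}\sim|B_k|\sim2^{kn}$.

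Next I would decompose $f=\sum_{j\in\zz}f\chi_j$ and use the sublinearity of $M_{\mathrm{HL}}$ to get $\|M_{\mathrm{HL}}(f)\chi_k\|_{X}\le\sum_{j\in\zz}\|M_{\mathrm{HL}}(f\chi_j)\chi_k\|_{X}$, and then estimate each summand according to the position of $j$ relative to $k$. If $|j-k|\le1$, then \eqref{e3.2} gives at once $\|M_{\mathrm{HL}}(f\chi_j)\chi_k\|_{X}\lesssim\|f\chi_j\|_{X}$. If $j\le k-2$, then for $x\in D_k$ every ball $B(x,r)$ with $B(x,r)\cap D_j\ne\emptyset$ has $r\gtrsim2^{k}$, whence $M_{\mathrm{HL}}(f\chi_j)(x)\lesssim2^{-kn}\int_{D_j}|f|\le2^{-kn}\|f\chi_j\|_{X}\|\chi_{D_j}\|_{X'}$ by H\"{o}lder; combining this with the facts above and Lemma \ref{l3.1x}(ii) for $X'$ yields
\begin{align*}
\|M_{\mathrm{HL}}(f\chi_j)\chi_k\|_{X}&\lesssim2^{-kn}\|\chi_{B_j}\|_{X'}\|\chi_{B_k}\|_{X}\,\|f\chi_j\|_{X}\\
&\sim\frac{\|\chi_{B_j}\|_{X'}}{\|\chi_{B_k}\|_{X'}}\,\|f\chi_j\|_{X}\lesssim2^{(j-k)n\delta_{X'}}\|f\chi_j\|_{X}.
\end{align*}
If $j\ge k+2$, then symmetrically every ball $B(x,r)$ based at $x\in D_k$ and meeting $D_j$ has $r\gtrsim2^{j}$, and the same bookkeeping with Lemma \ref{l3.1x}(ii) for $X$ gives $\|M_{\mathrm{HL}}(f\chi_j)\chi_k\|_{X}\lesssim2^{(k-j)n\delta_{X}}\|f\chi_j\|_{X}$.

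Finally I would assemble these estimates into the Herz norm. Writing $a_j:=2^{j\alpha}\|f\chi_j\|_{X}$, the three cases combine into
\begin{align*}
2^{k\alpha}\|M_{\mathrm{HL}}(f)\chi_k\|_{X}\lesssim\sum_{j\le k-2}2^{(k-j)(\alpha-n\delta_{X'})}a_j+\sum_{|j-k|\le1}a_j+\sum_{j\ge k+2}2^{(k-j)(\alpha+n\delta_{X})}a_j,
\end{align*}
so $\{2^{k\alpha}\|M_{\mathrm{HL}}(f)\chi_k\|_{X}\}_k$ is dominated by the convolution of $\{a_j\}$ with geometric sequences that are summable precisely because $-n\delta_{X}<\alpha<n\delta_{X'}$. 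Taking $\ell^q(\zz)$ norms and using Young's inequality when $q\ge1$, or the inequality $(\sum_i b_i)^q\le\sum_i b_i^q$ when $0<q<1$, bounds each of the three pieces by a constant times $\|\{a_j\}\|_{\ell^q(\zz)}=\|f\|_{\dot{\mathcal{K}}_{X}^{\alpha,\,q}(\rn)}$, which is the desired inequality.

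The step I expect to be the main obstacle is producing the sharp off-diagonal decay rates $2^{(j-k)n\delta_{X'}}$ and $2^{(k-j)n\delta_{X}}$: this forces one to combine the $\delta$-decay bound of Lemma \ref{l3.1x}(ii) (for both $X$ and $X'$) with the doubling-type bound Lemma \ref{l3.1x}(i) and with the identity $\|\chi_B\|_{X}\|\chi_B\|_{X'}\sim|B|$, and it is exactly these rates, weighed against $2^{k\alpha}$, that make the hypothesis $-n\delta_{X}<\alpha<n\delta_{X'}$ both necessary and sufficient for the geometric series to converge. By contrast, the near-diagonal range $|j-k|\le1$ is the only place where the boundedness \eqref{e3.2} of $M_{\mathrm{HL}}$ on $X$ itself is directly used.
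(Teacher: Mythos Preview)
The paper does not give its own proof of this lemma; it is simply quoted as \cite[Theorem~3.3]{w22}. Your direct argument is correct and is the standard route for such results: decompose $f=\sum_{j}f\chi_j$, bound $M_{\mathrm{HL}}(f\chi_j)$ pointwise on $D_k$ when $|j-k|\ge2$ by averaging over a ball of radius comparable to $2^{\max(j,k)}$, convert to $X$-norms via the generalized H\"older inequality (Lemma~\ref{l3.4x}), the relation $\|\chi_{B_k}\|_X\|\chi_{B_k}\|_{X'}\sim|B_k|$ (Lemma~\ref{l3.5x}), and the decay estimate Lemma~\ref{l3.1x}(ii) for $X$ and for $X'$, and then sum the resulting almost-diagonal system in $\ell^q(\zz)$ using $-n\delta_X<\alpha<n\delta_{X'}$. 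This is almost certainly the argument given in \cite{w22} as well.

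One small point worth flagging: your proof never explicitly uses the hypothesis $X'=X^*$. Within the paper, the constant $\delta_{X'}$ that you need is introduced only through Remark~\ref{rc2.10}, which in turn is stated under the full Assumption~1 (boundedness of $M_{\mathrm{HL}}$ on both $X$ and $X'$), whereas Lemma~\ref{l3.4x'} assumes only \eqref{e3.2} together with $X'=X^*$. So there is a slight mismatch between the stated hypotheses of the lemma and the tools the paper makes available; this is a feature of the paper's formulation (and presumably resolved in \cite{w22} via the $X'=X^*$ condition), not a defect in your argument.
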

\begin{remark}\label{r2.10}
 It is known from \cite[Remark 3.1]{w22} that a ball Banach function space $X$ has an absolutely continuous norm if
and only if $ X' = X^*$. Thus, the condition $ X' = X^*$
is satisfied for a
large class of ball Banach function spaces.  Many function spaces, such as Lebesgue
spaces, variable exponent Lebesgue spaces, mixed-norm Lebesgue spaces, Lorentz
spaces and Orlicz spaces have absolutely continuous norms under some mild conditions, and hence, they satisfy $ X' = X^*$.
\end{remark}

\begin{proposition}{\label{p2.7}}
Let  $X$ be a ball quasi-Banach function space and satisfy  Assumption\,1 and  $ X' = X^*$, where $X^*$ denote the dual space of $X$, and let  $0<\alpha<n\delta_{X'}$ and $q\in(0,\infty)$. Then
\begin{align*}
\mathcal{H}\dot{\mathcal{K}}_{X}^{\alpha,\,q}(\rn)\cap L^X
_{\mathrm{loc}}(\rn\setminus\{0\}) =\dot{\mathcal{K}}_{X}^{\alpha,\,q}(\rn).
\end{align*}

\end{proposition}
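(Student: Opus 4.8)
The plan is to prove the asserted identity by establishing the two inclusions separately: the inclusion ``$\subseteq$'' is elementary and comes from the pointwise domination of $f$ by its grand maximal function, while ``$\supseteq$'' rests on the pointwise bound $M_N(f)\ls M_{\mathrm{HL}}(f)$ together with the boundedness of $M_{\mathrm{HL}}$ on $\dot{\mathcal{K}}_{X}^{\alpha,\,q}(\rn)$ recorded in Lemma~\ref{l3.4x'}.

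\textbf{The inclusion ``$\subseteq$''.} Let $f\in\mathcal{H}\dot{\mathcal{K}}_{X}^{\alpha,\,q}(\rn)\cap L^X_{\mathrm{loc}}(\rn\setminus\{0\})$. Since $X$ is a ball Banach function space, the estimate $\int_B|g|\,dx\le C_{(B)}\|g\|_X$ for $B\in\mathfrak{B}$ shows that $f\in L^1_{\mathrm{loc}}(\rn\setminus\{0\})$, so a.e.\ $x\in\rn\setminus\{0\}$ is a Lebesgue point of $f$. Fix $\varphi_0\in\cs$ with $\int_{\rn}\varphi_0\,dx=1$ and put $c_0:=\dsup_{|\alpha|,\,|\beta|\leq N}\|\varphi_0\|_{\alpha,\,\beta}$, so that $\varphi_0/c_0\in\cs_N$. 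Because $f\in\cs'(\rn)$, the approximate identity property at Lebesgue points gives, for a.e.\ $x\in\rn$,
$$
|f(x)|=\lim_{t\to 0^+}\lf|f*(\varphi_0)_t(x)\r|\le M_{\varphi_0}(f)(x)=c_0\,M_{\varphi_0/c_0}(f)(x)\le c_0\,M_N(f)(x).
$$
Hence, by Definition~\ref{d2.1xx}(ii), $\|f\chi_k\|_X\le c_0\|M_N(f)\chi_k\|_X$ for every $k\in\zz$, and therefore
$$
\|f\|_{\dot{\mathcal{K}}_{X}^{\alpha,\,q}(\rn)}=\lf\{\sum_{k\in\zz}2^{k\alpha q}\|f\chi_k\|_X^q\r\}^{1/q}\le c_0\lf\{\sum_{k\in\zz}2^{k\alpha q}\|M_N(f)\chi_k\|_X^q\r\}^{1/q}=c_0\|f\|_{\mathcal{H}\dot{\mathcal{K}}_{X}^{\alpha,\,q}(\rn)}<\fz,
$$
which, together with $f\in L^X_{\mathrm{loc}}(\rn\setminus\{0\})$, yields $f\in\dot{\mathcal{K}}_{X}^{\alpha,\,q}(\rn)$.

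\textbf{The inclusion ``$\supseteq$''.} Let $f\in\dot{\mathcal{K}}_{X}^{\alpha,\,q}(\rn)$; then $f\in L^X_{\mathrm{loc}}(\rn\setminus\{0\})$ automatically, so it remains to check $M_N(f)\in\dot{\mathcal{K}}_{X}^{\alpha,\,q}(\rn)$. First one verifies that $f$ induces a tempered distribution (so that $M_N(f)$ is meaningful): writing $f=\sum_{k\in\zz}f\chi_k$, Hölder's inequality for the associate space gives $\int_{D_k}|f|\le\|f\chi_k\|_X\|\chi_{B_k}\|_{X'}$, and Lemma~\ref{l3.1x}(i)--(ii) applied to $X'$ (see Remark~\ref{rc2.10}) furnishes $\|\chi_{B_k}\|_{X'}\ls 2^{kn\delta_{X'}}$ for $k\le0$ and $\|\chi_{B_k}\|_{X'}\ls 2^{kn}$ for $k\ge0$; pairing $f$ against a Schwartz function and summing, the sum over $k\le0$ converges because $\alpha<n\delta_{X'}$ (which already forces $f\in L^1_{\mathrm{loc}}(\rn)$) and the sum over $k\ge0$ converges because a Schwartz function decays faster than any polynomial, so $f\in\cs'(\rn)$ and $M_{\mathrm{HL}}(f)$ is defined pointwise. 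Next, since $N>n+1$, each $\varphi\in\cs_N$ obeys $|\varphi(z)|\ls(1+|z|)^{-n-1}$, and a routine comparison of nonnegative, radially decreasing, $L^1$-normalized convolution kernels yields the pointwise bound $M_N(f)(x)\ls M_{\mathrm{HL}}(f)(x)$ for all $x\in\rn$. Finally, since $0<\alpha<n\delta_{X'}$ (hence $-n\delta_X<\alpha<n\delta_{X'}$) and $X$ satisfies \eqref{e3.2} with $X'=X^*$, Lemma~\ref{l3.4x'} says $M_{\mathrm{HL}}$ is bounded on $\dot{\mathcal{K}}_{X}^{\alpha,\,q}(\rn)$, so that
$$
\|M_N(f)\|_{\dot{\mathcal{K}}_{X}^{\alpha,\,q}(\rn)}\ls\|M_{\mathrm{HL}}(f)\|_{\dot{\mathcal{K}}_{X}^{\alpha,\,q}(\rn)}\ls\|f\|_{\dot{\mathcal{K}}_{X}^{\alpha,\,q}(\rn)}<\fz,
$$
i.e.\ $f\in\mathcal{H}\dot{\mathcal{K}}_{X}^{\alpha,\,q}(\rn)$. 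Combining the two inclusions proves the proposition.

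\textbf{Main obstacle.} The only genuinely delicate point is the embedding $\dot{\mathcal{K}}_{X}^{\alpha,\,q}(\rn)\hookrightarrow\cs'(\rn)$, namely that a function in $L^X_{\mathrm{loc}}(\rn\setminus\{0\})$ with finite Herz-type norm really is a tempered distribution; the Lebesgue differentiation step, the kernel comparison $M_N(f)\ls M_{\mathrm{HL}}(f)$, and the boundedness of $M_{\mathrm{HL}}$ are standard or already available. The difficulty is that $X$ has no explicit norm, so one must quantitatively replace the abstract quantities $\|f\chi_k\|_X$ by volume-type bounds; this is exactly where Lemma~\ref{l3.1x} (for $X'$) and the hypothesis $\alpha<n\delta_{X'}$ come in, the latter being precisely what makes the sum over the dyadic annuli shrinking to the origin converge.
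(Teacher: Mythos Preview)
Your proof is correct and follows the same route as the paper: the key step for ``$\supseteq$'' is the pointwise domination $M_N(f)\ls M_{\mathrm{HL}}(f)$ (which the paper cites as \eqref{e2.4xx}) combined with Lemma~\ref{l3.4x'}, and the ``$\subseteq$'' direction is the elementary bound $|f|\ls M_N(f)$ at Lebesgue points. Your argument is in fact considerably more careful than the paper's, which dispatches the proposition in two lines and does not verify the embedding $\dot{\mathcal{K}}_{X}^{\alpha,\,q}(\rn)\hookrightarrow\cs'(\rn)$ or the ``$\subseteq$'' inclusion at all; your treatment of those points (via Lemma~\ref{l3.1x} for $X'$ and the hypothesis $\alpha<n\delta_{X'}$) is a genuine improvement in rigor.
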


\begin{proof}
Let $X$, $\alpha$ and $q$ be as in the present proposition. From \cite{s71}, it is easy to see that
\begin{align}\label{e2.4xx}
M_N(f)(x)\leq C M_{\rm HL}(f)(x),\ \ x\in\rn,
\end{align}
which, combined with Lemma \ref{l3.4x'}, further yield that  $M_N f$ is bounded on $\dot{\mathcal{K}}_{X}^{\alpha,\,q}(\rn)$ and ${\mathcal{K}}_{X}^{\alpha,\,q}(\rn)$, respectively. By this, we easily obtain that Proposition \ref{p2.7} holds true.
\end{proof}



\section{The atomic decompositions of $\mathcal{H}\dot{\mathcal{K}}_{X}^{\alpha,\,q}(\rn)$ and $\mathcal{H}{\mathcal{K}}_{X}^{\alpha,\,q}(\rn)$}
\hskip\parindent
In this section, we establish atomic decompositions of the Herz-type Hardy spaces $\mathcal{H}\mathcal{K}_{X}^{\alpha,\,q}(\rn)$  associated with ball quasi-Banach function spaces $X$.
We first begin with
the following notions of $(X,\,\alpha,\,s)$-atom.

\begin{definition}\label{d3.1}
Let $X$ be a ball Banach function space, $\alpha \in [n\delta_{X'},\,\infty)$ and  $s\in[(\alpha-n\delta_{X'}),\,\infty)\cap\zz_+$. A {\it central $(X,\,\alpha,\,s)$-atom} is a measurable function $a$ on $\rn$ satisfying
\begin{enumerate}
\item[\rm{(i)}] (support condition) $\supp a\subset B_r:=B(0,\,r)\in\mathfrak{B}$;
\item[\rm{(ii)}] (size condition) $\|a\|_{X}\le {|B(0,\,r)|^{-\alpha/n}}$;
\item[\rm{(iii)}]  (vanishing moment condition) $\int_\rn a(x)x^\beta dx=0$ for any $\beta\in \mathbb{Z}^n_+$ with $|\beta|\leq s$.
\end{enumerate}
\end{definition}

\begin{definition}\label{d3.3xx}
 Let $X$ be a ball Banach function space, $q\in(0,\,\infty)$, $\alpha \in [n\delta_{X'},\,\infty)$ and $s\in[(\alpha-n\delta_{X'}),\,\infty)\cap\zz_+$.
  The {\it  Herz-type  atom Hardy space}
$\mathcal{H}\dot{\mathcal{K}}^{\alpha,\,s,\,q}_{X,\,\rm{atom}}(\rn)$ associated with $X$ is defined to be the set of all tempered distribution
$f\in\mathcal{S'}$ of the form $f=\sum_{j\in\zz}\lambda_j a_j$ in $\cs'$,
where $\{a_j\}_{j\in\zz}$ are central $(X,\,\alpha,\,s)$-atoms as in Definition \ref{d3.1} respectively, supported on $\{B_{r_j}\}_{j\in\zz}\subset\mathfrak{B}$, $\{\lambda_j\}_{j\in\zz}\subset\ccc$
and $\sum_{j\in\zz}|\lambda_j|^q<\infty.$
Moreover, the quasi-norm of $f\in \mathcal{H}\dot{\mathcal{K}}^{\alpha,\,s,\,q}_{X,\,\rm{atom}}$ is defined by
$$\|f\|_{\mathcal{H}\dot{\mathcal{K}}^{\alpha,\,s,\,q}_{X,\,\rm{atom}}(\rn)}:=\inf \lf(\sum_{j\in\zz}|\lambda_j|^q\r)^{1/q},$$
where the infimum is taken over all above decompositions of $f$.
\end{definition}

Now we state the main result as follows.

\begin{theorem}\label{t3.1}
Let $X$ be a ball Banach function space and satisfy  Assumption\,1, and let $q\in(0,\,\infty)$, $\alpha \in [n\delta_{X'},\,\infty)$ and $s\in[(\alpha-n\delta_{X'}),\,\infty)\cap\zz_+$. Then we have
$$\mathcal{H}\dot{\mathcal{K}}_{X}^{\alpha,\,q}(\rn)
=\mathcal{H}\dot{\mathcal{K}}^{\alpha,\,s,\,q}_{X,\,\rm{atom}}(\rn)$$
 with equivalent quasi-norms.
\end{theorem}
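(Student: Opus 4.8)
The plan is to establish the two inclusions separately, following the classical strategy of atomic decomposition for Herz-type Hardy spaces (as in \cite{l08}) but replacing the Lebesgue norm with the ball Banach function space norm and systematically using Lemma \ref{l3.1x} as the substitute for explicit power-weight estimates.

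\textbf{Step 1: $\mathcal{H}\dot{\mathcal{K}}^{\alpha,\,s,\,q}_{X,\,\rm{atom}}(\rn)\subset\mathcal{H}\dot{\mathcal{K}}_{X}^{\alpha,\,q}(\rn)$ (the easy inclusion).} Given $f=\sum_{j\in\zz}\lambda_j a_j$ with each $a_j$ a central $(X,\,\alpha,\,s)$-atom supported on $B_{r_j}$, I would split $M_N(f)$ dyadically as $\|M_N(f)\chi_k\|_X$ for each $k\in\zz$. For each $k$, write $\|M_N(f)\chi_k\|_X\ls\sum_j|\lambda_j|\,\|M_N(a_j)\chi_k\|_X$ and distinguish, for each atom supported in a ball of radius $r_j\sim 2^{\ell_j}$, the ``near'' terms ($k\le\ell_j+c$) from the ``far'' terms ($k>\ell_j+c$). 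For the near terms, use \eqref{e2.4xx} (namely $M_N(a)\ls M_{\rm HL}(a)$), the boundedness of $M_{\rm HL}$ on $X$ from Assumption\,1, and the size condition $\|a_j\|_X\le|B_{r_j}|^{-\alpha/n}$; then apply Lemma \ref{l3.1x}(ii) to compare $\|\chi_k\|_X$ with $\|\chi_{B_{r_j}}\|_X$ and absorb the factor $2^{k\alpha}$. For the far terms, exploit the vanishing moments: a Taylor expansion of $\varphi_t$ gives the standard pointwise decay $M_N(a_j)(x)\ls |B_{r_j}|^{(s+1)/n}|x|^{-n-s-1}\|a_j\|_{L^1}$ for $x$ far from the support, then pass from the $L^1$ norm of $a_j$ to its $X$ norm via the ball Banach property $\int_B|a_j|\ls \|\chi_B\|_{X'}\|a_j\|_X$ and use Lemma \ref{l3.1x} again. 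Summing the resulting geometric series in $k-\ell_j$ (here the constraint $s\ge\alpha-n\delta_{X'}$ and $\alpha\ge n\delta_{X'}$ is exactly what makes both series converge) and then taking the $\ell^q$ norm over $k$, with a further application of Hölder or the triangle inequality in $\ell^q$ (discrete Young's inequality for the convolution in $k$), yields $\|f\|_{\mathcal{H}\dot{\mathcal{K}}_{X}^{\alpha,\,q}(\rn)}\ls(\sum_j|\lambda_j|^q)^{1/q}$.

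\textbf{Step 2: $\mathcal{H}\dot{\mathcal{K}}_{X}^{\alpha,\,q}(\rn)\subset\mathcal{H}\dot{\mathcal{K}}^{\alpha,\,s,\,q}_{X,\,\rm{atom}}(\rn)$ (the hard inclusion).} Given $f\in\mathcal{H}\dot{\mathcal{K}}_{X}^{\alpha,\,q}(\rn)$, I would first invoke the known atomic (or Calder\'on--Zygmund) decomposition of the Hardy space $H_X(\rn)$ due to Sawano et al.\ \cite{s17} to write $f=\sum_i\mu_i b_i$ where the $b_i$ are $(X,\infty,s)$-type atoms supported on balls $Q_i$; alternatively, decompose $f$ directly through the level sets of $M_N(f)$. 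The crux is to re-organize this global decomposition into a \emph{central} atomic decomposition adapted to the annuli $D_k$: group the atoms $b_i$ according to the dyadic annulus containing (the bulk of) $Q_i$, and for each $k$ set $h_k:=\sum_{i:\,Q_i\subset\text{annulus }k}\mu_i b_i$, so that $f=\sum_k h_k$. One then shows each $h_k$, after normalization, is a constant multiple $\lambda_k$ times a central $(X,\alpha,s)$-atom supported on $B_{c2^k}$, with $\|h_k\|_X\ls 2^{k\alpha}$-controlled; this requires estimating $\|h_k\|_X$ by $\|M_N(f)\chi_{\widetilde D_k}\|_X$ over a slightly enlarged annulus, using the maximal-function control together with the support localization. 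The vanishing-moment requirement on $h_k$ is the delicate point: the partial sums defining $h_k$ may fail the moment conditions, so I would subtract off correcting polynomials (supported on the annular balls) exactly as in the classical Herz-Hardy construction, and verify these corrections can themselves be redistributed among neighboring annuli without destroying the $\ell^q$ bound; here the lower bound $\alpha\ge n\delta_{X'}$ and $s\ge\alpha-n\delta_{X'}$, via Lemma \ref{l3.1x}(i), guarantee the correction terms are summable. Finally $(\sum_k|\lambda_k|^q)^{1/q}\ls\|M_N(f)\|_{\dot{\mathcal{K}}_{X}^{\alpha,\,q}(\rn)}=\|f\|_{\mathcal{H}\dot{\mathcal{K}}_{X}^{\alpha,\,q}(\rn)}$.

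\textbf{Main obstacle.} The genuinely hard part is Step 2: converting a decomposition that is natural for $H_X(\rn)$ (organized by level sets of the maximal function, with atoms on arbitrary balls) into one organized by the fixed dyadic annuli $D_k$, while preserving \emph{all} of the moment conditions and keeping the $\ell^q$ summability of the coefficients. In the classical weighted setting one has explicit control of $\||x|^{\alpha q}\chi_E\|$, whereas here every such comparison must be routed through Lemma \ref{l3.1x}, and the exponents $\delta_X,\delta_{X'}$ enter the convergence of each geometric series; keeping track of these indices — in particular ensuring the polynomial-correction terms near the annular boundaries do not blow up — is where the argument demands care. The boundedness hypotheses on $M_{\rm HL}$ on $X$ and $X'$ (Assumption\,1) and the identity $X''=X$ (Lemma \ref{l20}) are used repeatedly to pass between $X$-norms, $X'$-norms and $L^1$ averages over balls.
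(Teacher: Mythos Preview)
Your Step 1 is essentially the paper's argument: split near/far, use $M_N\ls M_{\rm HL}$ and the $X$-boundedness of $M_{\rm HL}$ for the near part, and the Taylor-remainder estimate plus the generalized H\"older inequality (Lemma \ref{l3.4x}) together with Lemma \ref{l3.1x}/Remark \ref{rc2.10} for the far part. The paper splits the cases $0<q\le 1$ and $1<q<\infty$ explicitly, but the mechanism is the one you describe.

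Step 2, however, has a real gap. You propose to start from the atomic decomposition of $H_X(\rn)$ from \cite{s17} and then regroup the resulting atoms by annuli. The problem is that membership in $\mathcal{H}\dot{\mathcal{K}}_{X}^{\alpha,\,q}(\rn)$ does \emph{not} imply membership in $H_X(\rn)$: the Herz norm $\bigl(\sum_k 2^{k\alpha q}\|M_N(f)\chi_k\|_X^q\bigr)^{1/q}$ and the $X$-norm $\|M_N(f)\|_X$ are in general incomparable (already in the classical case $X=L^p$ one has $\dot K^{\alpha,q}_p\not\subset L^p$ for $\alpha\neq 0$), so the Sawano--Ho--Yang--Yang machinery is simply not available for a generic $f\in\mathcal{H}\dot{\mathcal{K}}_{X}^{\alpha,\,q}(\rn)$. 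Your alternative ``level sets of $M_N(f)$'' is the Hardy-space, not the Herz--Hardy, mechanism and runs into the same obstruction.

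The paper's route (which is the classical Lu--Yang construction \cite{ly95} transported to the $X$-setting) avoids this entirely by working \emph{directly} with the annular structure: mollify $f$ to $f^{(i)}=f*\varphi_{(i)}\in C^\infty$, take a smooth partition of unity $\{\Phi_k\}$ supported on slightly fattened annuli $\tilde D_{k,\epsilon}$, and write
\[
f^{(i)}=\sum_k\bigl(f^{(i)}\Phi_k-P_k^{(i)}\bigr)+\sum_k P_k^{(i)},
\]
where $P_k^{(i)}$ is the orthogonal projection of $f^{(i)}\Phi_k$ onto polynomials of degree $\le s$ on $\tilde D_{k,\epsilon}$. The first sum already gives central atoms with coefficients $\lambda_k\sim 2^{k\alpha}\sum_{l=k-1}^{k+1}\|M_N(f)\chi_l\|_X$; the second sum is rewritten, via the dual polynomial basis and an Abel-type resummation, as another family of central atoms with the same coefficient control. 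One then extracts the limiting atoms by Banach--Alaoglu (weak$^*$ compactness in $X$) along a diagonal subsequence and checks convergence in $\cs'$ by a direct estimate using the moment conditions and Lemma \ref{l3.1x} on $X'$. No appeal to $H_X(\rn)$ is made or needed. Your instinct that polynomial corrections on annuli are the heart of the matter is correct; what is missing is that the decomposition must be built from the annuli \emph{ab initio}, not obtained by post-processing an $H_X$-decomposition that may not exist.
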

To prove Theorem \ref{t3.1}, we need two crucial lemmas as follows.
\begin{lemma}\rm{\cite[Lemma 2.5]{z20}}\label{l3.4x}
Let $X$ be a ball Banach function space with associate space $X'$. If $f\in X$ and $g\in X'$, then $fg$ is integrable on $\rn$ and
$$\int_{\rn}|f(x)g(x)|\,dx\leq C\|f\|_{X}\|g\|_{X'}.$$
This inequality is named the generalized H\"{o}lder inequality.
\end{lemma}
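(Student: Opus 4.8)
The plan is to derive the inequality directly from the definition of the associate norm $\|\cdot\|_{X'}$, since the generalized H\"older inequality is essentially encoded in that definition. Recall that for any $g\in X'$ one has $\|g\|_{X'}=\sup\lf\{\|gh\|_{L^1}:h\in X,\ \|h\|_X=1\r\}$; the heart of the argument is to feed a suitably normalized multiple of $|f|$ into this supremum and then rescale using the homogeneity of $\|\cdot\|_X$, which yields the bound with constant $C=1$.

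More precisely, I would proceed as follows. First dispose of the degenerate case: if $\|f\|_X=0$, then axiom (i) of Definition \ref{d2.1xx} forces $f=0$ almost everywhere, so $fg=0$ almost everywhere and both sides of the asserted inequality vanish. Assume now $\|f\|_X\in(0,\fz)$. Since $f$ and $|f|$ have the same absolute value, the monotonicity axiom (ii) of Definition \ref{d2.1xx} (applied in both directions) gives $|f|\in X$ with $\||f|\|_X=\|f\|_X$. Set $h:=|f|/\|f\|_X$; then $h\in X$ with $\|h\|_X=1$, so $h$ is admissible in the supremum defining $\|g\|_{X'}$. Consequently,
\begin{align*}
\frac{1}{\|f\|_X}\int_{\rn}|f(x)g(x)|\,dx=\int_{\rn}|h(x)g(x)|\,dx=\|hg\|_{L^1}\leq\|g\|_{X'},
\end{align*}
and multiplying through by $\|f\|_X$ gives $\int_{\rn}|f(x)g(x)|\,dx\leq\|f\|_X\|g\|_{X'}$. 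Since the right-hand side is finite, $fg$ is integrable on $\rn$, which establishes both the integrability claim and the quantitative bound simultaneously.

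The proof carries no serious obstacle; the only points requiring care are bookkeeping steps. The scaling $h=|f|/\|f\|_X$ is legitimate precisely because $\|\cdot\|_X$ is a genuine (quasi-)norm and is therefore absolutely homogeneous, so that passing from the unit-norm test functions appearing in the definition of $\|g\|_{X'}$ to an arbitrary $f\in X$ is valid. One must also verify that $|f|$ lies in $X$ with the same norm, which is exactly what the lattice axiom (ii) provides. Finally, I note that the constant may in fact be taken to be $C=1$; should one instead prefer the fully symmetric formulation, one could invoke Lemma \ref{l20} (the identification $X=X''$) to interchange the roles of $X$ and $X'$, but this refinement is not needed for the statement as worded.
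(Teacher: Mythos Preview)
Your argument is correct and is precisely the standard derivation of the generalized H\"older inequality from the definition of the associate norm; the constant can indeed be taken equal to $1$. The paper itself does not supply a proof of this lemma but simply cites it from \cite[Lemma~2.5]{z20}, so there is nothing further to compare.
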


\begin{lemma}\rm{\cite[Lemma 2.2]{is17}}\label{l3.5x}
 Let $X$ be a ball Banach function space. Suppose that the Hardy
 Littlewood maximal operator $M_{\mathrm{HL}}$ is weakly bounded on $X$, that is, there exists a positive
 constant $C'$ such that,
 \begin{equation}\label{ea3.3x}
 \|\chi_{\{M_{\mathrm{HL}}f>\lambda\}}\|_{X}\leq C' \lambda^{-1}\|f\|_{X}
  \end{equation}
  is true for all $f\in X$ and $\lambda >0$.
 Then there exists a positive constant $C>0$ such that for any $B \in {\mathfrak{B}}$,
$$\frac{1}{|B|}\|\chi_B\|_{X}\|\chi_B\|_{X'}\leq C.$$
\end{lemma}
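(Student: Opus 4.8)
The plan is to exploit the weak $(X,X)$ boundedness \eqref{ea3.3x} of $M_{\mathrm{HL}}$ together with the dual (associate-norm) description of $\|\chi_B\|_{X'}$ to control the product $\|\chi_B\|_X\|\chi_B\|_{X'}$. Fix a ball $B=B(x_0,r)\in\mathfrak{B}$. The first step is a pointwise lower bound for the maximal function of a function concentrated on $B$: for any $g\in X$ supported in $B$ and any $y\in B$, the inclusion $B\subset B(y,2r)$ gives
$$M_{\mathrm{HL}}(g)(y)\ge \frac{1}{|B(y,2r)|}\int_{B(y,2r)}|g|=\frac{1}{2^n|B|}\int_B|g|.$$
Hence, assuming $\int_B|g|>0$ and choosing the threshold $\lambda:=(2^{n+1}|B|)^{-1}\int_B|g|$, every $y\in B$ satisfies $M_{\mathrm{HL}}(g)(y)>\lambda$, so $B\subset\{M_{\mathrm{HL}}(g)>\lambda\}$.

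The second step feeds this inclusion into the weak bound. By the monotonicity axiom (ii) of Definition \ref{d2.1xx} and \eqref{ea3.3x},
$$\|\chi_B\|_X\le \|\chi_{\{M_{\mathrm{HL}}(g)>\lambda\}}\|_X\le C'\lambda^{-1}\|g\|_X = C'\,2^{n+1}|B|\,\frac{\|g\|_X}{\int_B|g|},$$
which rearranges to the key estimate $\int_B|g|\le C'2^{n+1}|B|\,\|g\|_X/\|\chi_B\|_X$, valid for every $g\in X$ supported in $B$ (the case $\int_B|g|=0$ being trivial).

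The third step passes to the associate norm. For an arbitrary $g\in X$ with $\|g\|_X\le1$, replacing $g$ by $g\chi_B$ (which has $\|g\chi_B\|_X\le\|g\|_X$ by axiom (ii) and leaves $\int_B|g|$ unchanged) allows the key estimate to be applied, so $\|\chi_B g\|_{L^1}=\int_B|g|\le C'2^{n+1}|B|/\|\chi_B\|_X$. Taking the supremum over all such $g$ and recalling that $\|\chi_B\|_{X'}=\sup\{\|\chi_B g\|_{L^1}:g\in X,\ \|g\|_X=1\}$ yields $\|\chi_B\|_{X'}\le C'2^{n+1}|B|/\|\chi_B\|_X$, that is,
$$\frac{1}{|B|}\|\chi_B\|_X\|\chi_B\|_{X'}\le C'2^{n+1},$$
which is the claim with $C:=C'2^{n+1}$.

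I expect no serious obstacle here: the argument is a clean combination of a geometric averaging estimate with the very definition of the associate space, and it nowhere uses a concrete norm expression. The only two points requiring care are the reduction to $g$ supported in $B$ (needed so that the maximal-function lower bound captures the full mass $\int_B|g|$, and legitimate because truncation does not increase the $X$-norm) and the verification that the resulting constant is independent of $B$; the latter is automatic, since both $C'$ and the factor $2^{n+1}$ are independent of the center $x_0$ and radius $r$.
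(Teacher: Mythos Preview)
Your argument is correct. The paper does not supply its own proof of this lemma: it is quoted verbatim from \cite[Lemma 2.2]{is17} and used as a black box, so there is no in-paper proof to compare against. Your approach---lower-bounding $M_{\mathrm{HL}}(g\chi_B)$ on $B$ by the average $(2^n|B|)^{-1}\int_B|g|$, feeding the resulting level-set inclusion into the weak-type bound \eqref{ea3.3x}, and then taking the supremum in the definition of $\|\chi_B\|_{X'}$---is the standard and essentially unique way to prove this inequality, and it is precisely how the cited reference proceeds. The only cosmetic point is that the supremum defining $\|\cdot\|_{X'}$ is taken over $\|g\|_X=1$ rather than $\|g\|_X\le1$; since $X$ is a \emph{Banach} function space this is harmless by homogeneity, but it is worth a one-line remark.
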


\begin{proof}[Proof of Theorem \ref{t3.1}] The proof is divided into two steps.

\textbf{Step 1.} In this step, we show that
\begin{equation}\label{e3.3xx}
\mathcal{H}\dot{\mathcal{K}}^{\alpha,\,s,\,q}_{X,\,\rm{atom}}(\rn)
\subset\mathcal{H}\dot{\mathcal{K}}_{X}^{\alpha,\,q}(\rn).
\end{equation}
Let $f\in\mathcal{H}\dot{\mathcal{K}}^{\alpha,\,s,\,q}_{X,\,\rm{atom}}(\rn)$. By Definition \ref{d3.3xx},
we know that there exist $\{\lambda_{j}\}_{j\in\mathbb{Z}}\subset\mathbb{C}$ and
a sequence of central $(X,\,\alpha,\,s)$-atoms  $\{a_{j}\}_{j\in\mathbb{Z}}$,
supported, respectively, on $\{B_{r_j}\}_{j\in\zz}\subset\mathfrak{B}$
such that
$$
f=\sum_{j\in\mathbb{Z}}\lambda_j a_j     \quad {\rm in}\ \cs'
$$
and
$$\|f\|_{\mathcal{H}\dot{\mathcal{K}}^{\alpha,\,s,\,q}_{X,\,\rm{atom}}(\rn)}\sim \lf(\sum_{j\in\zz}|\lambda_j|^q\r)^{1/q}<\infty.$$
Here, without loss of generality, suppose that $B_{r_j}:=B_j=\{x:\,|x|\leq 2^{j}\}$ for $j\in\zz$. To prove $f\in\mathcal{H}\dot{\mathcal{K}}_{X}^{\alpha,\,q}(\rn)$, we consider two cases: $0<q\leq1$
and $1<q<\infty$.

\textbf{Case 1:} $0<q\leq1$. It suffices to prove that there exists a positive constant $C$ such that for each central
$(X,\,\alpha,\,s)$-atom $a$,
\begin{align*}
\|M_N(a)\|_{\mathcal{H}\dot{\mathcal{K}}_{X}^{\alpha,\,q}(\rn)}\leq C,
\end{align*}
where the constant $C$ is independent of $a$. For any given central
$(X,\,\alpha,\,s)$-atom $a$ with supp\,$a
\subset B_{k_0}=\{x:\,|x|\leq2^{k_0}\}$ for some $k_0\in\zz$, we have
\begin{align*}
\|M_N(a)\|_{\dot{\mathcal{K}}_{X}^{\alpha,\,q}(\rn)}^{q}&=\sum_{k=-\infty}^{k_0+3}|B_k|^{\alpha q/n}\lf\|
M_N(a)\chi_k\r\|_{X}^q\\
&\ \ \ +\sum_{k=k_0+4}^{\infty}|B_k|^{\alpha q/n}\lf\|
M_N(a)\chi_k\r\|_{X}^q\\
&=:{\rm I+II}.
\end{align*}
First, let us deal with I.  From \eqref{e3.2}, \eqref{e2.4xx} and the size condition of $a$, it follows that
\begin{align*}
{\rm I}&\leq\lf\|
M_N(a)\r\|_{X}^q\sum_{k=-\infty}^{k_0+3}|B_k|^{\alpha q/n}
\leq\lf\|
M_{\rm HL}(a)\r\|_{X}^q\sum_{k=-\infty}^{k_0+3}|B_k|^{\alpha q/n}
\leq\|
a\|_{X}^q\sum_{k=-\infty}^{k_0+3}|B_k|^{\alpha q/n}\leq C.
\end{align*}

For II, we need a pointwise estimate for $M_N(a)(x)$ on $D_k$. Let $\varphi\in \mathcal{S}_N$, $s\in \nn$
such that $\alpha-n\delta_{X'}<s+1$.  Denote by $P_s$ the $s-$th order Taylor series expansion of $\varphi$. If
$|x -y| <t$, then by the vanishing moment condition of $a$, Taylor Remainder Theorem and supp\,$a\subset B(0,\,2^{k_0})$, we obtain that
\begin{align*}
|a*\varphi_t(y)|&=t^{-n}\lf|\int_{\rn}a(z)\lf( \varphi \lf(\frac{y-z}{t}\r)-P_s\lf(-\frac{z}{t}\r)\r) dz\r|\\
&\leq C t^{-n}\int_{\rn}|a(z)|\lf|\frac{z}{t}\r|^{s+1}\lf(1+\frac{|y-\theta z|}{t}\r)^{-(n+s+1)} dz\\
 &\leq C \int_{B_{k_0}}|a(z)||{z}|^{s+1}(
 t+{|y-\theta z|})^{-(n+s+1)} dz,
\end{align*}
where $\theta\in (0,\,1)$. By $x\in D_k$ and $k\geq k_0+4$, we have $|x|>2^{k_0+2}$. From $|x -y| <t$ and $z\in B_{k_0}$, we conclude that
$$t+|y-\theta z|\geq |x-y|+|y-\theta z|\geq|x|-|z|\geq\frac{|x|}{2}.
$$
By this, Lemma \ref{l3.4x} and the size condition of $a$, we obtain that
\begin{align*}
|a*\varphi_t(y)|
 &\leq C\int_{B_{k_0}}|a(z)||{z}|^{s+1}(
 t+{|y-\theta z|})^{-(n+s+1)} dz\\
 &\leq C 2^{k_0(s+1)}|x|^{-(n+s+1)}\int_{B_{k_0}}|a(z)| dz\\
&\leq C 2^{k_0(s+1)}|x|^{-(n+s+1)}\|a\|_{X}\|\chi_{B_{k_0}}\|_{X'}\\
&\leq C 2^{k_0(s+1)}|x|^{-(n+s+1)}|B_{k_0}|^{-\alpha/n}\|\chi_{B_{k_0}}\|_{X'}.
\end{align*}
Thus, for any $x\in D_k$ and $k\geq k_0+4$,
\begin{align}\label{ea3.3xx}
M_N(a)(x)
\leq C 2^{k_0(s+1)-k(n+s+1)}|B_{k_0}|^{-\alpha/n}\|\chi_{B_{k_0}}\|_{X'}.
\end{align}
Notice that  \eqref{e3.2} implies \eqref{ea3.3x}. Indeed, suppose that  \eqref{e3.2} holds true. By Definition \ref{d2.1xx} (ii), we obtain that
$$ \|\chi_{\{M_{\mathrm{HL}}f>\lambda\}}\|_{X}\leq\lf \|\frac{M_{\mathrm{HL}}f}{\lambda}\r\|_{X} \leq C\lambda^{-1}\|f\|_{X}
  $$
Thus, when \eqref{ea3.3x} is replaced by  \eqref{e3.2} in Lemma \ref{l3.5x}, the Lemma \ref{l3.5x} still holds true. From \eqref{ea3.3xx}, \eqref{e3.2}, Lemma \ref{l3.5x} and Remark \ref{rc2.10}, we deduce that
\begin{align*}
{\rm II}&\leq C\sum_{k=k_0+4}^{\infty}2^{q[k_0(s+1)-k(n+s+1)]}\lf(\frac{|B_k|}{|B_{k_0}|}\r)^{\alpha q/n}\|\chi_{B_{k_0}}\|_{X'}^{q}\|\chi_{B_{k}}\|_{X}^{q}\\
&= C\sum_{k=k_0+4}^{\infty}2^{q[k_0(s+1)-k(n+s+1)]}\lf(\frac{|B_k|}{|B_{k_0}|}\r)^{\alpha q/n}\|\chi_{B_{k_0}}\|_{X'}^{q}\\
&\ \ \ \times\lf(|B_k|\|\chi_{B_{k}}\|_{X'}^{-1}\r)^{q}\lf(\frac{1}{|B_k|}\|\chi_{B_{k}}\|_{X'}\|\chi_{B_{k}}\|_{X}\r)^{q}\\
&\leq C\sum_{k=k_0+4}^{\infty}2^{q(k_0-k)(s+1-\alpha)}\lf(\frac{\|\chi_{B_{k_0}}\|_{X'}}
{\|\chi_{B_{k}}\|_{X'}}\r)^{q}\\
&\leq C\sum_{k=k_0+4}^{\infty}2^{q(k_0-k)(s+1-\alpha+n\delta_{X'})}\leq C.
\end{align*}
Combining the above estimates related to I and II, we obtain the desired estimate for the case $0<q\leq1$.

\textbf{Case 2:} $1<q<\infty$. We assume that $f=\sum_{j\in\zz}\lambda_ja_j \,\,\,\,\mathrm{in}\,\,\,\, \mathcal{S}^{'},$
 where each $a_j$ is a central
$(X,\,\alpha,\,s)$-atom with support contained in $B_j$ and
$$\sum_{j\in\zz}|\lambda_j|^q<\infty.$$
Combining \eqref{e3.2} and \eqref{e2.4xx}, we have
\begin{align*}
\|M_N(f)\|_{\mathcal{H}\dot{\mathcal{K}}_{X}^{\alpha,\,q}(\rn)}^{q}&\leq\sum_{k\in\zz}|B_k|^{\alpha q/n}\lf(\sum_{j\in\zz}|\lambda_j|\lf\|
M_N(a_j)\chi_k\r\|_{X}\r)^q\\
&\leq C \sum_{k\in\zz}|B_k|^{\alpha q/n}\lf(\sum_{j=k-1}^{\infty}|\lambda_j|\lf\|
a_j\r\|_{X}\r)^q\\
&\ \ \ + C\sum_{k\in\zz}|B_k|^{\alpha q/n}\lf(\sum_{j=-\infty}^{k-2}|\lambda_j|\lf\|
M_N(a_j)\chi_k\r\|_{X}\r)^q\\
&=:{\rm \tilde{I}+\tilde{II}}.
\end{align*}
From the size condition of $a_j$ and the H\"{o}lder inequality inequality, it follows that
\begin{align*}
{\rm \tilde{I}}
&\leq C\sum_{k\in\zz}|B_k|^{\alpha q/n}\lf(\sum_{j=k-1}^{\infty}|\lambda_j||B_j|^{-\alpha /n}\r)^q\\
&\leq C\sum_{k\in\zz}|B_k|^{\alpha q/n}
\lf(\sum_{j=k-1}^{\infty}|\lambda_j|^q|B_j|^{-\alpha q/(2n)}\r)\lf(\sum_{j=k-1}^{\infty}|B_j|^{-\alpha q'/(2n)}\r)^{q/q'}\\
&\leq C\sum_{k\in\zz}|B_k|^{\alpha q/(2n)}
\sum_{j=k-1}^{\infty}|\lambda_j|^q|B_j|^{-\alpha q/(2n)}\\
&\leq C
\sum_{j\in\zz}|\lambda_j|^q.
\end{align*}
 Applying a similar estimate of II, the H\"{o}lder inequality and  the fact that $s+1>\alpha-n\delta_{X'}$, we have
\begin{align*}
{\rm \tilde{II}}&\leq C\sum_{k\in\zz}\lf(\sum_{j=-\infty}^{k-2}|\lambda_j|
2^{j(s+1)-k(n+s+1)}\lf(\frac{|B_k|}{|B_{j}|}\r)^{\alpha /n}\|\chi_{B_{j}}\|_{X'}\|\chi_{B_{k}}\|_{X}\r)^{q}\\
&\leq C\sum_{k\in\zz}\lf(\sum_{j=-\infty}^{k-2}|\lambda_j|
2^{(j-k)(s+1-\alpha+n\delta_{X'})}\r)^{q}\\
&\leq C\sum_{k\in\zz}\lf(\sum_{j=-\infty}^{k-2}|\lambda_j|^q
2^{(j-k)(s+1-\alpha+n\delta_{X'})q/2}\r)^{q}\lf(\sum_{j=-\infty}^{k-2}
2^{(j-k)(s+1-\alpha+n\delta_{X'})q'/2}\r)^{q/q'}\\
&\leq C
\sum_{j\in\zz}|\lambda_j|^q.
\end{align*}
This proves the desired estimate for the case $1<q<\infty$. Thus, we obtain that \eqref{e3.3xx} holds true.

\textbf{Step 2.}
 In this step, we prove that
  \begin{equation*}
\mathcal{H}\dot{\mathcal{K}}_{X}^{\alpha,\,q}(\rn)
\subset\mathcal{H}\dot{\mathcal{K}}^{\alpha,\,s,\,q}_{X,\,\rm{atom}}(\rn).
\end{equation*}
  Choosing $\varphi\in\mathcal{C}_0^\infty(\rn)$ such that $\varphi\geq0$, $\int_\rn\varphi(x)\,dx=1$ and supp\,$\varphi\subset\{x:\,|x|\leq1\}$. For any $f\in \mathcal{H}\dot{\mathcal{K}}_{X}^{\alpha,\,q}(\rn)$, set
$f^{(i)}:=f\ast\varphi_{(i)}$, where $ \varphi_{(i)}(\cdot):= 2^{in}\varphi(2^{i}\cdot)$ for any $i\in\zz_
+$. It is  easy to see that $f^{(i)}\in \mathcal{C}^\infty(\rn)$ and $\lim_{i\rightarrow\infty}f^{(i)}\rightarrow f$ in $\mathcal{S}{'}$.
Now we divide \textbf{Step 2} into two substeps.

\textbf{Substep 1.}\,We show that,
for any $x\in\rn$,
$$f^{(i)}(x)=\sum_{j\in\zz}\lambda_{j}a_{j}^{(i)}(x),$$
where $a_{j}^{(i)}$ is a central
$(X,\,\alpha,\,s)$-atom with $\mathrm{supp}\,a_{j}^{(i)}\subset B_{j+2}$,  $\lambda_{j}$ is independent of $i$
and $$\sum_{j\in\zz}|\lambda_j|^q\leq C \|M_Nf\|_ {\dot{K}_{X}^{\alpha,\,q}(\rn)}^{q},$$
with the constant $C > 0$ independent of $f$.

Let $\psi$ be a radial smooth function such that
$0\leq\psi\leq1$, supp\,$\psi \subset \{ x : 1/2- \epsilon \leq |x|\leq 1 +\epsilon \}$ with $0<\epsilon<1/4$ and $\psi(x)=1$ when $ 1/2\leq |x|\leq 1$. Let $\psi_{k}(\cdot):=\psi(2^{-k}\cdot)$ for
$k\in\zz$  and
$$\tilde{D}_{k,\,\epsilon}:=\{x:2^{k-1}-2^{k}\epsilon\leq|x|\leq 2^{k} +2^{k}\epsilon\}.$$
Notice that
{supp}\,$\psi_{k}\subset\tilde{D}_{k,\,\epsilon}$. Let
$$
\Phi_k(x):=\lf\{
\begin{array}{cl}
\frac{\psi_{k}(x)}{\sum_{j\in\zz}\psi_{j}(x)},\,\,\, & \mathrm{if} \ x \neq0,\\
0, \,\,\,& \mathrm{if} \ x =0.
\end{array}\r.
$$
We can obtain that
\begin{align} \label{e3.4xx}
\mathrm{supp}\,\Phi_k\subset\tilde{D}_{k,\,\epsilon}, \,\,0\leq\Phi_k(x)\leq1 \,\,\mathrm{and}\,\, \sum_{k\in\zz}\Phi_k(x)=1 \,\,\mathrm{if}\,\,x\neq0.
\end{align}
For some $s\in\nn$,  denote by $\mathcal{{P}}_s$ the class of all the
real polynomials with the degree less than $s$.
Let $P_k^{(i)}(x):=P_{\tilde{D}_{k,\,\epsilon}}(f^{(i)}\Phi_k)(x)\chi_{\tilde{D}_{k,\,\epsilon}}(x)\in \mathcal{{P}}_s(\rn)$
be the unique polynomial satisfying
$$\int_{\tilde{D}_{k,\,\epsilon}}\lf(f^{(i)}\Phi_k)(x)-P_k^{(i)}(x)\r)x^\beta dx=0,\ \ \  |\beta|\leq s:=\lfloor\alpha-n\delta_{X'}\rfloor.$$
Then we have
\begin{align*}
f^{(i)}(x)&=f^{(i)}(x)\sum_{k\in\zz}\Phi_k(x)\\
&=\sum_{k\in\zz}\lf(f^{(i)}(x)\Phi_k(x)-P_k^{(i)}(x)\r)+\sum_{k\in\zz}P_k^{(i)}(x)\\
&=:{\rm I}_1^{(i)}+{\rm I}_2^{(i)}.
\end{align*}
To deal with ${\rm I}_1^{(i)}$, let
$$g_k^{(i)}(x):=f^{(i)}(x)\Phi_k(x)-P_k^{(i)}(x)$$
and
$$a_{k}^{(i)}(x):=\frac{g_k^{(i)}(x)}{\lambda_{k}},\,\,\,
\lambda_{k}:=C_1|B_{k+1}|^{\alpha/n}\sum_{l=k-1}^{k+1}\|M_Nf\chi_l\|_{X},$$
where $C_1$ is a constant which will be chosen later. Then we know that
$$\mathrm{supp}\,a_{k}^{(i)}\subset B_{k+1}\,\,{\rm and}\,\,\int_{\rn}a_{k}^{(i)}(x)x^\beta\,dx=0
,\,\, |\beta|\leq s:=\lfloor\alpha-n\delta_{X'}\rfloor. $$
Moreover,
$${\rm I}_1^{(i)}=\sum_{k\in\zz}\lambda_{k}a_{k}^{(i)}(x).$$

Now let's  estimate $\|g_k^{(i)}\|_{X}$. Let $\{\phi_\gamma^{k}:|\gamma|\leq s\}$ be the orthogonal
polynomials restricted to $\tilde{D}_{k,\,\epsilon}$ with respect to the weight $1/|\tilde{D}_{k,\,\epsilon}|$, which are obtained
from $\{x^\beta:|\beta|\leq s\}$ by Gram-Schmidt's method, which means
\begin{align}\label{e3.4x'}
\lf\langle\phi_\nu^{k},\,\phi_\mu^{k}\r\rangle=\frac{1}{|\tilde{D}_{k,\,\epsilon}|}
\int_{\tilde{D}_{k,\,\epsilon}}\phi_\nu^{k}(x)\,\phi_\mu^{k}(x)dx=\delta_{\nu\,\mu},
\end{align}
where $\delta_{\nu\,\mu}=1$ for $\nu=\mu$, otherwise $0$.

It is easy to see that
$$P_k^{(i)}(x)=\sum_{|\gamma|\leq s}\lf\langle f^{(i)}\Phi_k,\,\phi_\gamma^{k}\r\rangle\phi_\gamma^{k}(x),\ \ x\in\tilde{D}_{k,\,\epsilon}.$$
From \eqref{e3.4x'}, we deduce that
\begin{align*}
\frac{1}{|\tilde{D}_{1,\,\epsilon}|}
\int_{\tilde{D}_{1,\,\epsilon}}\phi_\nu^{k}(2^{k-1}y),\,\phi_\mu^{k}((2^{k-1}y))dy=\delta_{\nu\,\mu}.
\end{align*}
Thus we have $\phi_\nu^{k}(2^{k-1}y)=\phi_\nu^{1}(y)$ $a.e.$.  That is $\phi_\nu^{k}(x)=\phi_\nu^{1}({2^{1-k}}x)$ $a.e.$ for any $x\in\tilde{D}_{k,\,\epsilon}$. Therefore, we have $|\phi_\nu^{k}(x)|\leq C$. For any $x\in\tilde{D}_{k,\,\epsilon}$, by Lemma \ref{l3.4x}, we obtain that
\begin{align*}
|P_k^{(i)}(x)|&\leq\frac{C}{|\tilde{D}_{k,\,\epsilon}|}\int_{\tilde{D}_{k,\,\epsilon}}f^{(i)}(x)\Phi_k(x)dx\\
&\leq\frac{C}{|\tilde{D}_{k,\,\epsilon}|}\|f^{(i)}\Phi_k\|_{X}\|\chi_{\tilde{D}_{k,\,\epsilon}}\|_{X'}.
\end{align*}
From this, \eqref{e3.2}, Lemma \ref{l3.5x} and \eqref{e3.4xx}, it follows that
\begin{align*}
\|g_k^{(i)}\|_{X}&\leq\|f^{(i)}\Phi_k\|_{X}+\|P_k^{(i)}\|_{X}\\
&\leq \|f^{(i)}\Phi_k\|_{X}+
\frac{C}{|\tilde{D}_{k,\,\epsilon}|}\|f^{(i)}\Phi_k\|_{X}\|\chi_{\tilde{D}_{k,\,\epsilon}}\|_{X'}
\|\chi_{\tilde{D}_{k,\,\epsilon}}\|_{X}\\
&\leq \|f^{(i)}\Phi_k\|_{X}+C\|f^{(i)}\Phi_k\|_{X}\\
&\leq C'\sum_{l=k-1}^{k+1}\|M_Nf\chi_l\|_{X}.
\end{align*}
Take $C_1=C^{'}$, then
$\|a_{k}^{(i)}\|_{X}\leq|B_{k+1}|^{-\alpha/n}$, and hence $a_{k}^{(i)}$ is a central
$(X,\,\alpha,\,s)$-atom  with $\mathrm{supp}\,a_{k}^{(i)}\subset B_{k+1}$. Moreover,
\begin{align*}
\sum_{k\in\zz}|\lambda_{k}|^q\leq C\sum_{k\in\zz}|B_{k+1}|^{\alpha q/n}\lf(\sum_{l=k-1}^{k+1}
\|M_Nf\chi_l\|_{X}\r)^q
\leq C\|M_Nf\|_ {\dot{K}_{X}^{\alpha,\,q}(\rn)}<\infty,
\end{align*}
where $C$ is independent of $i$ and $f$.

Next we deal with ${
\rm I}_2^{(i)}$. Let $\{\psi_\gamma^k : |\gamma|\leq s\}$ be the dual basis of $\{x^\beta:|\beta|\leq s\}$ with respect to the weight  $1/|\tilde{D}_{k,\,\epsilon}|$ on $\tilde{D}_{k,\,\epsilon}$, that is
\begin{align*}
\lf\langle \psi_\gamma^k,\,x^\beta\r\rangle=\frac{1}{|\tilde{D}_{k,\,\epsilon}|}
\int_{\tilde{D}_{k,\,\epsilon}}\psi_\gamma^k(x)\,x^\beta dx=\delta_{\beta\,\gamma}.
\end{align*}
By an argument similar to that used in the proof of \cite{ly95}, let
 \begin{align*}
h_{k,\,\gamma}^{(i)}(x):=\sum_{l=-\infty}^{k}\lf( \frac{\psi_\gamma^k(x)\,\chi_{\tilde{D}_{k,\,\epsilon}}(x)}{|\tilde{D}_{k,\,\epsilon}|}
-\frac{\psi_\gamma^{k+1}(x)\,\chi_{\tilde{D}_{k+1,\,\epsilon}}(x)}{|\tilde{D}_{k+1,\,\epsilon}|}\r)
\int_{\rn}f^{(i)}(x)\Phi_l(x)dx,
\end{align*}
and we have
$$\int_{\rn}h_{k,\,\gamma}^{(i)}(x)\,x^\beta dx=0,\ \ \  |\beta|\leq s=\lfloor\alpha-n\delta_{X'}\rfloor.$$
Then we can write
\begin{align*}
{\rm I}_2^{(i)}&=\sum_{k\in\zz}\sum_{|\gamma|\leq m}\lf\langle f^{(i)}\Phi_k,\,x^\gamma\r\rangle
\frac{\psi_\gamma^k(x)\,\chi_{\tilde{D}_{k,\,\epsilon}}(x)}{|\tilde{D}_{k,\,\epsilon}|}\\
&=\sum_{|\gamma|\leq m}\sum_{k\in\zz}\lf(\int_{\rn}f^{(i)}(x)\Phi_k(x)\,x^\gamma dx\r)
\frac{\psi_\gamma^k(x)\,\chi_{\tilde{D}_{k,\,\epsilon}}(x)}{|\tilde{D}_{k,\,\epsilon}|}\\
&=\sum_{|\gamma|\leq m}\sum_{k\in\zz}\lf(\sum_{l=-\infty}^{k}\int_{\rn}f^{(i)}(x)\Phi_l(x)\,x^\gamma dx\r)\\
&\ \ \ \times\lf( \frac{\psi_\gamma^k(x)\,\chi_{\tilde{D}_{k,\,\epsilon}}(x)}{|\tilde{D}_{k,\,\epsilon}|}
-\frac{\psi_\gamma^{k+1}(x)\,\chi_{\tilde{D}_{k+1,\,\epsilon}}(x)}{|\tilde{D}_{k+1,\,\epsilon}|}\r)\\
&=\sum_{|\gamma|\leq m}\sum_{k\in\zz}h_{k,\,\gamma}^{(i)}(x).
\end{align*}
Let $a_{k,\,\gamma}^{(i)}:=h_{k,\,\gamma}^{(i)}(x)/\lambda_{k,\,\gamma}$ and $$\lambda_{k,\,\gamma}:=C_2b^{(k+2)\alpha_{k+2}}\sum_{l=k-1}^{k+2}\|M_Nf\chi_l\|_{X},$$
where $C_2$ is a constant to be determined later. Notice that
$$\mathrm{supp}\,a_{k,\,d}^{(i)}\subset\tilde{D}_{k,\,\epsilon}\cup \tilde{D}_{k+1,\,\epsilon} \subset B_{k+2}\,\,{\rm and}\,\,\int_{\rn}a_{k,\,\gamma}^{(i)}(x)x^\beta\,dx=0
,\,\, |\beta|\leq s:=\lfloor\alpha-n\delta_{X'}\rfloor.$$
Moreover,
$${\rm I}_2^{(i)}=\sum_{k\in\zz}\lambda_{k,\,d}\,a_{k,\,\gamma}^{(i)}(x).$$

Now we estimate  $\|h_{k,\,\gamma}^{(i)}\|_{X}$. From \eqref{e3.4xx}, we conclude that
$$\int_{\rn}\sum_{l=-\infty}^{k}|\Phi_l(x)\,x^\gamma| dx\leq
\sum_{l=-\infty}^{k}\int_{\tilde{D}_{k,\,\epsilon}}|\Phi_l(x)\,x^\gamma| dx \leq C 2^{k(n+|d|)}.$$
Thus, by a computation, we obtain that
$$\lf|\int_{\rn}f^{(i)}(y)\sum_{l=-\infty}^{k}\Phi_l(y)\,y^\gamma dx\r|\leq C 2^{k(n+|d|)}M_Nf(x),\,\,x\in B_{k+2},$$
which, together with the fact that
 $$\lf( \frac{\psi_\gamma^k(x)\,\chi_{\tilde{D}_{k,\,\epsilon}}(x)}{|\tilde{D}_{k,\,\epsilon}|}
-\frac{\psi_\gamma^{k+1}(x)\,\chi_{\tilde{D}_{k+1,\,\epsilon}}(x)}{|\tilde{D}_{k+1,\,\epsilon}|}\r)\leq C 2^{k(n+|d|)}\sum_{l=k-1}^{k+1}\chi_l(x),$$
further implies that
$$\lf\|h_{k,\,\gamma}^{(i)}\r\|_{X}\leq C''\,2^{k(n+|d|)}\sum_{l=k-1}^{k+1}\|M_Nf\chi_l\|_{X},$$
where $ C''$ is a constant independent of $i,\, f, k$ and $\gamma$.
Choose $C_2=C''$. It is easy to see that $a_{k,\,\gamma}^{(i)}$ is a central
$(X,\,\alpha,\,s)$-atom with $\mathrm{supp}\,a_{k,\,\gamma}^{(i)}\subset B_{k+2}$.
Furthermore,
\begin{align*}
\sum_{k,\,\gamma}|\lambda_{k,\,\gamma}|^q\leq C\sum_{k\in\zz}|B_{k+2}|^{\alpha q/n}\lf(\sum_{l=k-1}^{k+1}
\|M_Nf\chi_l\|_{X}\r)^q
\leq C\|M_Nf\|_ {\dot{\mathcal{K}}_{X}^{\alpha,\,q}(\rn)}<\infty.
\end{align*}

Therefore, we  conclude that for any $x\in\rn$,
$$f^{(i)}(x)=\sum_{j\in\zz}\lambda_{j}a_{j}^{(i)}(x),$$
where $a_{j}^{(i)}$ is a central
$(X,\,\alpha,\,s)$-atom with $\mathrm{supp}\,a_{j}^{(i)}\subset B_{k+2}$,  $\lambda_{j}$ is independent of $i$
and$$\sum_{j\in \zz}|\lambda_j|^q\lesssim\|M_Nf\|_ {\dot{\mathcal{K}}_{X}^{\alpha,\,q}(\rn)}^{q}<\infty.$$

Notice that
$$\sup_{i\in\zz_+}\|a_{0}^{(i)}\|_{X}\leq|B_2|^{-\alpha/n}.$$
Combining the Banach-Alaoglu theorem, we obtain a subsequence $\{a_{0}^{(i_{n_0})}\}$ of $\{a_{0}^{(i)}\}$ converging in the w$^\ast$ topology of $X$ to some $a_0\in X$. It is obvious that $a_0$ is a a central
$(X,\,\alpha,\,s)$-atom with $\mathrm{supp}\,a_0\subset B_{2}$. Next, since
$$\sup_{i_{n_0}\in\zz_+}\|a_{0}^{(i_{n_0})}\|_{X}\leq|B_3|^{-\alpha/n},$$
applying Banach-Alaoglu theorem, we obtain that there exists a subsequent $\{a_{1}^{(i_{n_1})}\}$ of $\{a_{1}^{(i_{n_0})}\}$ converging in the w$^\ast$ topology of $X$ to a central
$(X,\,\alpha,\,s)$-atom $a_1$ with $\mathrm{supp}\,a_1\subset B_{3}$. Repeating the above procedure for any $j\in\zz$, we can find a subsequence $\{a_{j}^{(i_{n_j})}\}$ of $\{a_{j}^{(i)}\}$ converging in the w$^\ast$ topology of $X$ to a central
$(X,\,\alpha,\,s)$-atom $a_j$ with $\mathrm{supp}\,a_j\subset B_{j+2}$. By usual diagonal method we get a subsequence $\{i_\nu\}$ of $\zz_+$
such that for any $j\in\nn$, $\lim_{\nu\rightarrow\infty}a_{j}^{(i_{\nu})}=a_j$ in the w$^\ast$ topology of $X$ and therefore in $\mathcal{S}^{'}$.

\textbf{Substep 2.}
In this substep, we prove
\begin{align}\label{e2.7}
f=\sum_{j\in\zz}\lambda_ja_j\,\,\mathrm{in}\,\,\mathcal{S}^{'}.
\end{align}
 Observe that
\begin{align}\label{e3.7xx}
\mathrm{supp}\,a_{j}^{(i_{\nu})}\subset(\tilde{D}_{j,\,\epsilon}\cup \tilde{D}_{j+1,\,\epsilon})
\subset (D_{j-1}\cup D_{j}\cup D_{j+1}\cup D_{j+2})\subset B_{j+2}.
\end{align}
Then for any $\phi\in\mathcal{S}$, we have
\begin{align}\label{e3.8xx}
\langle f,\,\phi\rangle=\lim_{\nu\rightarrow\infty}\sum_{j\in\zz}\lambda_j\int_{\rn}a_{j}^{(i_{\nu})}(x)\phi(x)\,dx
\ \ \ (\text{see \cite{ly95} for more details}).\end{align}

 Notice that $s=\lfloor\alpha-n\delta_{X'}\rfloor$. If $j\leq0$, then, by the atom conditions of $a_{j}^{(i_{\nu})}$, Taylor Remainder Theorem, Lemma \ref{l3.4x} and Remark \ref{rc2.10}, we obtain that
\begin{align*}
\lf|\int_{\rn}a_{j}^{(i_{\nu})}(x)\phi(x)\,dx\r|&=\lf|\int_{\rn}a_{j}^{(i_{\nu})}(x)\lf(\phi(x)-\sum_{|\beta|\leq s}\frac{\partial^{\beta}\phi(0)}{\beta!}x^{\beta}\r)\,dx\r|\\
&\leq C \int_{B_{j+2}}\lf|a_{j}^{(i_{\nu})}(x)\r||x|^{s+1}\,dx\\
&\leq C\, 2^{j(s+1)} \int_{\rn}\lf|a_{j}^{(i_{\nu})}(x)\r|\chi_{B_{j+2}}(x)\,dx\\
&\leq C\, 2^{j(s+1)}\lf\|a_{j}^{(i_{\nu})}\r\|_{X}\|\chi_{B_{j+2}}\|_{X'}\\
&\leq C\, 2^{j(s+1-\alpha)}\lf(\frac{|B_{j+2}|}{|B_{2}|}\r)^{\delta_{X'}}\|\chi_{B_{2}}\|_{X'}\\
&\leq C\, 2^{j(s+1-\alpha+n\delta_{X'})}\frac{|B_{2}|}{|B_{0}|}\|\chi_{B_{0}}\|_{X'}\\
&\leq C\, 2^{j(s+1-\alpha+n\delta_{X'})}.
\end{align*}
If $j>0$, choose $k_0\in\zz_+$ such that $k_0+\alpha-n>0$, then by \eqref{e3.7xx}, Lemma \ref{l3.4x}, the size condition of $a_{j}^{(i_{\nu})}$ and Remark \ref{rc2.10}, we get
\begin{align*}
\lf|\int_{\rn}a_{j}^{(i_{\nu})}(x)\phi(x)\,dx\r|&\leq C\,\int_{\rn}\lf|a_{j}^{(i_{\nu})}(x)\r||x|^{-k_0}\,dx\\
&\leq C\, 2^{-jk_0}\lf\|a_{j}^{(i_{\nu})}\r\|_{X}\|\chi_{B_{j+2}}\|_{X'}\\
&\leq C\, 2^{-j(k_0+\alpha)}\frac{|B_{j+2}|}{|B_{0}|}\|\chi_{B_{0}}\|_{X'}\\
&\leq C\, 2^{-j(k_0+\alpha-n)},
\end{align*}
where $C$ is independent of $j$.

Let
$$
\mu_j:=\lf\{
\begin{array}{cl}
\lf|\lambda_j\r|2^{j(s+1-\alpha+n\delta_{X'})},\,\,\, &  \ j \leq0,\\
|\lambda_j|2^{-j(k_0+\alpha-n)}, &\ j >0.
\end{array}\r.
$$
Then, we have
$$\sum_{j=-\infty}^{\infty}|\mu_j|\leq C\lf(\sum_{j=-\infty}^{\infty}|\lambda_j|^q\r)^{1/q}\leq C\|M_Nf\|_ {\dot{K}_{X}^{\alpha,\,q}(\rn)}$$
and
$$|\lambda_j|\lf|\int_{\rn}a_{j}^{(i_{\nu})}(x)\phi(x)\,dx\r|\leq C|\mu_j|,
$$
which, together with \eqref{e3.8xx},  further implies that
$$\langle f,\,\phi\rangle=\sum_{j\in\zz}\lim_{\nu\rightarrow\infty}\lambda_j\int_{\rn}a_{j}^{(i_{\nu})}(x)\phi(x)\,dx
=\sum_{j\in\zz}\lambda_j\int_{\rn}a_{j}(x)\phi(x)\,dx,$$
and hence \eqref{e2.7} holds true. Thus, we have
 \begin{equation*}
\mathcal{H}\dot{\mathcal{K}}_{X}^{\alpha,\,q}(\rn)
\subset\mathcal{H}\dot{\mathcal{K}}^{\alpha,\,s,\,q}_{X,\,\rm{atom}}(\rn).
\end{equation*}This finishes the proof of Theorem \ref{t3.1}.
\end{proof}
\begin{remark}\label{r4.4}
If $f\in\mathcal{H}\dot{\mathcal{K}}_{X}^{\alpha,\,q}(\rn)\cap \mathcal{S}$, we can replace $f^{(i)}$ by $f$ in the proof
of Step 2 , and we further obtain
\begin{align*}
f(x)=\sum_{j\in\zz}\lambda_ja_j(x)+\sum_{j\in\zz}\mu_jb_j(x),
\end{align*}
where
\begin{align*}
\|a_j\|_{X}\leq \lf|B(0,\,2^{j+1})\r|^{-\alpha/n},\,\, \|b_j\|_{X}\leq \lf|B(0,\,2^{j+2})\r|^{-\alpha/n},\\
\supp a_j \subset \tilde{D}_{j,\,\epsilon},\,\, \supp b_j \subset \tilde{D}_{j,\,\epsilon}\cup\tilde{D}_{j+1,\,\epsilon}
\end{align*}
and
\begin{align*}
0\leq\lambda_j,\,\mu_j\leq C 2^{\alpha j}\sum_{k=j-1}^{j+1}\|M_Nf\chi_k\|_{X}
\end{align*}
with $N>n+\alpha+1$.
\end{remark}

As an application of Theorem \ref{t3.1}, we obtain the boundedness of certain sublinear operators from $\mathcal{H}\dot{\mathcal{K}}_{X}^{\alpha,\,q}(\rn)$ to $\dot{\mathcal{K}}_{X}^{\alpha,\,q}(\rn)$  as follows.

\begin{theorem}\label{t5.1}
Let $X$ be a ball Banach function space and satisfy  Assumption\,1, and let $q\in(0,\,\infty)$, $\alpha \in [n\delta_{X'},\,\infty)$ and $s\in[(\alpha-n\delta_{X'}),\,\infty)\cap\zz_+$. If a sublinear operator $T$ satisfies that
\begin{enumerate}
\item[\rm{(i)}]
$T$ is bounded on $X$;
\item[\rm{(ii)}]
there exists a constant $\delta>0$ such that $s+\delta>\alpha-n\delta_{X'}$, and for any compact
support function $f$ with
$$\int_{\rn}f(x)\,x^\beta dx=0,\,\,\,|\beta|\leq s,$$
$T(f)$ satisfies the size condition
\begin{align}\label{e}
|T(f)(x)|&\leq C ({\rm diam}\,({\rm supp} \,f))^{s+\delta} |x|^{-(n+s+\delta)} \|f\|_{L^1{(\rn)}},\\ &\,\,\,\mathrm{if}\,\, {\rm dist}\,(x,\,{\rm supp}\,f)\geq\frac{|x|}{2}\nonumber.
\end{align}
 \end{enumerate}
 Then there exists a positive constants $C_1$  independent of $f$ such that,  for any
 $f\in \mathcal{H}\dot{\mathcal{K}}_{X}^{\alpha,\,q}(\rn)$,
 $$\|T(f)\|_{\dot{\mathcal{K}}_{X}^{\alpha,\,q}(\rn)}\leq C_1\|f\|_{\mathcal{H}\dot{\mathcal{K}}_{X}^{\alpha,\,q}(\rn)}.$$
\end{theorem}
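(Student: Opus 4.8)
The plan is to reduce the boundedness of $T$ to an estimate on a single central atom, using the atomic decomposition from Theorem \ref{t3.1}. Given $f\in\mathcal{H}\dot{\mathcal{K}}_{X}^{\alpha,\,q}(\rn)$, write $f=\sum_{j\in\zz}\lambda_j a_j$ in $\cs'$, where each $a_j$ is a central $(X,\,\alpha,\,s)$-atom supported on $B_j=\{x:\,|x|\le 2^j\}$ and $\bigl(\sum_{j\in\zz}|\lambda_j|^q\bigr)^{1/q}\ls\|f\|_{\mathcal{H}\dot{\mathcal{K}}_{X}^{\alpha,\,q}(\rn)}$. By sublinearity, $|T(f)|\le\sum_{j\in\zz}|\lambda_j||T(a_j)|$, so
$$\|T(f)\|_{\dot{\mathcal{K}}_{X}^{\alpha,\,q}(\rn)}^{q}\le\sum_{k\in\zz}|B_k|^{\alpha q/n}\lf(\sum_{j\in\zz}|\lambda_j|\,\|T(a_j)\chi_k\|_{X}\r)^q,$$
and I would split the inner sum over $j$ into the ``local'' part $j\ge k-1$ and the ``far'' part $j\le k-2$, exactly mirroring the split into $\tilde{I}$ and $\tilde{II}$ in the proof of Theorem \ref{t3.1}.

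For the local part $j\ge k-1$, I would use only hypothesis (i): since $T$ is bounded on $X$, $\|T(a_j)\chi_k\|_{X}\le\|T(a_j)\|_{X}\ls\|a_j\|_{X}\le|B_j|^{-\alpha/n}$, and then the same Hölder-inequality argument used to bound $\tilde{I}$ (in the case $1<q<\infty$; for $0<q\le1$ one uses the $\ell^q$–triangle inequality directly) gives a bound by $C\sum_j|\lambda_j|^q$. For the far part $j\le k-2$, I would invoke hypothesis (ii): for $x\in D_k$ with $k\ge j+2$ we have $|x|>2^{j+1}$ and ${\rm dist}(x,\,{\rm supp}\,a_j)\ge|x|-2^j\ge|x|/2$, so the size condition \eqref{e} applies. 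Using $\diam({\rm supp}\,a_j)\ls 2^j$, the generalized Hölder inequality (Lemma \ref{l3.4x}) together with the size condition of $a_j$ to estimate $\|a_j\|_{L^1(\rn)}\le\|a_j\|_X\|\chi_{B_j}\|_{X'}\le|B_j|^{-\alpha/n}\|\chi_{B_j}\|_{X'}$, I obtain for $x\in D_k$
$$|T(a_j)(x)|\ls 2^{j(s+\delta)}|x|^{-(n+s+\delta)}|B_j|^{-\alpha/n}\|\chi_{B_j}\|_{X'}\ls 2^{j(s+\delta)-k(n+s+\delta)}|B_j|^{-\alpha/n}\|\chi_{B_j}\|_{X'},$$
and hence $\|T(a_j)\chi_k\|_{X}\ls 2^{j(s+\delta)-k(n+s+\delta)}|B_j|^{-\alpha/n}\|\chi_{B_j}\|_{X'}\|\chi_{B_k}\|_{X}$. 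Now I would absorb the geometric factors precisely as in the estimate of $II$ in Theorem \ref{t3.1}: writing $\|\chi_{B_j}\|_{X'}\|\chi_{B_k}\|_X$ in terms of $\|\chi_{B_k}\|_{X'}\|\chi_{B_k}\|_X/|B_k|$ (controlled by Lemma \ref{l3.5x}, whose hypothesis is implied by \eqref{e3.2} as noted after \eqref{ea3.3xx}) and the ratio $\|\chi_{B_j}\|_{X'}/\|\chi_{B_k}\|_{X'}$ (controlled by Lemma \ref{l3.1x}(ii) applied to $X'$, i.e. Remark \ref{rc2.10}, giving a gain of $2^{(j-k)n\delta_{X'}}$), all the $|B|$-powers collapse and one is left with
$$\|T(a_j)\chi_k\|_{X}\ls 2^{(j-k)(s+\delta-\alpha+n\delta_{X'})}\cdot|B_j|^{-\alpha/n}\cdot|B_k|^{\alpha/n}\cdot(\text{bounded}),$$
so that $|B_k|^{\alpha/n}\|T(a_j)\chi_k\|_X\ls 2^{(j-k)(s+\delta-\alpha+n\delta_{X'})}$ with a strictly positive exponent thanks to the assumption $s+\delta>\alpha-n\delta_{X'}$. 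Feeding this into the double sum and applying Hölder's inequality in $j$ (when $1<q<\infty$) or the $\ell^q$-inequality (when $0<q\le1$) to the geometric weight yields the far-part bound $C\sum_j|\lambda_j|^q$.

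Combining the two parts gives $\|T(f)\|_{\dot{\mathcal{K}}_{X}^{\alpha,\,q}(\rn)}^{q}\ls\sum_{j\in\zz}|\lambda_j|^q$, and taking the infimum over all atomic decompositions of $f$ (legitimate by Theorem \ref{t3.1}) produces $\|T(f)\|_{\dot{\mathcal{K}}_{X}^{\alpha,\,q}(\rn)}\ls\|f\|_{\mathcal{H}\dot{\mathcal{K}}_{X}^{\alpha,\,q}(\rn)}$, as desired. The main obstacle I anticipate is the bookkeeping in the far-part estimate: one must verify carefully that every power of $|B_j|$ and $|B_k|$ cancels and that the residual exponent is governed by $s+\delta-\alpha+n\delta_{X'}>0$, which is exactly why hypothesis (ii) is stated with this gap condition; a secondary point requiring care is the interchange of $T$ with the (infinite) atomic sum, which is justified by sublinearity at the level of pointwise absolute values rather than by linearity.
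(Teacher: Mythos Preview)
Your proposal is correct and follows essentially the same route as the paper's own proof: atomic decomposition via Theorem~\ref{t3.1}, split of the inner $j$-sum at $j=k-2$, hypothesis~(i) for the local piece and hypothesis~(ii) together with Lemmas~\ref{l3.4x}, \ref{l3.5x} and Remark~\ref{rc2.10} for the far piece, then the usual $\ell^q$/H\"older dichotomy in $q$. The only cosmetic difference is that the paper treats the far part ($S_1$) before the local part ($S_2$); your observation about the interchange of $T$ with the infinite atomic sum is a point the paper leaves implicit.
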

\begin{proof}
 Let $f\in \mathcal{H}\dot{\mathcal{K}}_{X}^{\alpha,\,q}(\rn)$. From Theorem \ref{t3.1},
we know that there exist $\{\lambda_j\}_{j\in\nn}\subset\ccc$ and a sequence of central $(X,\,\alpha,\,s)$-atoms, $\{a_j\}_{j\in\zz}$, supported, respectively,
on $\{{B_j}\}_{j\in\zz}\subset\mathfrak{B}$ such that
\begin{align*}
f=\sum_{j\in\zz} \lz_{j}a_j \ \ \mathrm{in\ } \ \cs'
\end{align*}
and
\begin{eqnarray*}
\|f\|_{\mathcal{H}\dot{\mathcal{K}}_{X}^{\alpha,\,q}(\rn)}\sim \lf(\sum_{j\in\zz_+}|\lambda_j|^q\r)^{1/q}<\infty.
\end{eqnarray*}
Thus, we have
\begin{align*}
\|T(f)\|_{\dot{\mathcal{K}}_{X}^{\alpha,\,q}(\rn)}^q&=\sum_{k\in\zz}2^{k\alpha q}\lf\|T(f)\chi_k\r\|
_{X}^q\\
&\leq C \sum_{k\in\zz}2^{k\alpha q}\lf(\sum_{j=-\infty}^{k-2}|\lambda_j|\lf\|
T(a_j)\chi_k\r\|_{X}\r)^q\\
&\ \ \ + C\sum_{k\in\zz}2^{k\alpha q}\lf(\sum_{j=k-1}^{\infty}|\lambda_j|\lf\|
T(a_j)\chi_k\r\|_{X}\r)^q\\
&=:{\rm S_1+S_2}.
\end{align*}
To deal with ${\rm S_1}$, we first estimate $\|
T(a_j)\|_{X}$.
For any $x\in D_k$ and $j\leq k-2$, by \eqref{e}, Lemma \ref{l3.4x} and the size condition of $a_j$, we obtain that
\begin{align*}
|T(a_j)(x)|&\leq C 2^{j(s+\delta)}|x|^{-(n+s+\delta)}\int_{B_{j}}\lf|a_{j}(y)\r|\,dy\\
&\leq C\, 2^{j(s+\delta)}2^{-k(n+s+\delta)} \int_{\rn}\lf|a_{j}(x)\r|\chi_{B_{j}}(x)\,dx\\
&\leq C\,2^{j(s+\delta)}2^{-k(n+s+\delta)}\lf\|a_{j}\r\|_{X}\|\chi_{B_{j}}\|_{X'}\\
&\leq C\,2^{j(s+\delta-\alpha)-k(n+s+\delta)}\|\chi_{B_{j}}\|_{X'}.
\end{align*}
From this, Lemma \ref{l3.4x}, \eqref{e3.2}, Lemma \ref{l3.5x} and Remark \ref{rc2.10}, it follows that
\begin{align}\label{e4.3xx}
\lf\|
T(a_j)\chi_k\r\|_{X}&\leq C\,2^{j(s+\delta-\alpha)-k(n+s+\delta)}\|\chi_{B_{j}}\|_{X'}\|\chi_{B_{k}}\|_{X}\\
&= C\,2^{j(s+\delta-\alpha)-k(n+s+\delta)}\|\chi_{B_{j}}\|_{X'}\nonumber\\
&\ \ \ \times\lf(|B_k|\|\chi_{B_{k}}\|_{X'}^{-1}\r)\lf(\frac{1}{|B_k|}\|\chi_{B_{k}}\|_{X'}\|\chi_{B_{k}}\|_{X}\r)\nonumber\\
&\leq C\,2^{j(s+\delta-\alpha)-k(s+\delta)}\frac{\|\chi_{B_{j}}\|_{X'}}
{\|\chi_{B_{k}}\|_{X'}}\nonumber\\
&\leq C\,2^{(s+\delta+n\delta_{X'})(j-k)-j\alpha}\nonumber.
\end{align}
Now, let's estimate ${\rm S_1}$ in the following two cases.

\textbf{Case 1:} $0<q\leq1$. From \eqref{e4.3xx} and  $n\delta_{X'}\leq\alpha<s+\delta+n\delta_{X'}$, we deduce that
\begin{align*}
{\rm S_1}&=C \sum_{k\in\zz}2^{k\alpha q}\lf(\sum_{j=-\infty}^{k-2}|\lambda_j|\lf\|
T(a_j)\chi_k\r\|_{X}\r)^q\\
&\leq C \sum_{k\in\zz}2^{k\alpha q}\lf(\sum_{j=-\infty}^{k-2}|\lambda_j|^q 2^{[(s+\delta+n\delta_{X'})(j-k)-j\alpha]q}\r)\\
&\leq C \sum_{j\in\zz}|\lambda_j|^q \sum_{k=j+2}^{\infty} 2^{(s+\delta+n\delta_{X'}-\alpha)(j-k)q}\\
&\leq C \sum_{j\in\zz}|\lambda_j|^q.
\end{align*}

\textbf{Case 2:} $1<q\leq\infty$. Since  $n\delta_{X'}\leq\alpha<s+\delta+n\delta_{X'}$, by \eqref{e4.3xx} and  the H\"{o}lder inequality, we obtain that
\begin{align*}
{\rm S_1}
&\leq C \sum_{k\in\zz}2^{k\alpha q}\lf(\sum_{j=-\infty}^{k-2}|\lambda_j| 2^{(s+\delta+n\delta_{X'})(j-k)-j\alpha}\r)^q\\
&\leq C \sum_{k\in\zz}\lf(\sum_{j=-\infty}^{k-2}|\lambda_j|^q 2^{(s+\delta+n\delta_{X'}-\alpha)(j-k)q/2}\r)\\
&\ \ \ \times\lf(\sum_{j=-\infty}^{k-2}2^{(s+\delta+n\delta_{X'}-\alpha)(j-k)q'/2}\r)^{q/q'}\\
&\leq C \sum_{j\in\zz}|\lambda_j|^q \sum_{k=j+2}^{\infty} 2^{(s+\delta+n\delta_{X'}-\alpha)(j-k)q/2}\\
&\leq C \sum_{j\in\zz}|\lambda_j|^q.
\end{align*}

Next, let's estimate ${\rm S_2}$. Similarly to the estimate of ${\rm S_1}$, we consider two cases.

\textbf{Case A:} $0<q\leq1$. By  the boundedness of $T$ on $X$,  the size condition of $a_j$, we conclude that
\begin{align*}
{\rm S_2}&=C\sum_{k\in\zz}2^{k\alpha q}\lf(\sum_{j=k-1}^{\infty}|\lambda_j|\lf\|
T(a_j)\chi_k\r\|_{X}\r)^q\\
&\leq C\sum_{k\in\zz}2^{k\alpha q}\lf(\sum_{j=k-1}^{\infty}|\lambda_j|^q\lf\|
a_j\r\|_{X}^q\r)\\
&\leq C\sum_{k\in\zz}2^{k\alpha q}\lf(\sum_{j=k-1}^{\infty}|\lambda_j|^q\lf|B_{j}\r|^{-\alpha q/n}\r)\\
&\leq C \sum_{j\in\zz}|\lambda_j|^q \sum_{k=-\infty}^{j+1} 2^{(k-j)\alpha q}\\
&\leq C \sum_{j\in\zz}|\lambda_j|^q.
\end{align*}

\textbf{Case B:} $1<q\leq\infty$.  By  the H\"{o}lder inequality,  boundedness of $T$ on $X$,  the size condition of $a_j$, we have
\begin{align*}
{\rm S_2}&\leq C\sum_{k\in\zz}2^{k\alpha q}\lf(\sum_{j=k-1}^{\infty}|\lambda_j|^q\lf\|
T(a_j)\chi_k\r\|_{X}^{q/2}\r)\lf(\sum_{j=k-1}^{\infty}\lf\|
T(a_j)\chi_k\r\|_{X}^{q'/2}\r)^{q/q'}\\
&\leq C\sum_{k\in\zz}2^{k\alpha q}\lf(\sum_{j=k-1}^{\infty}|\lambda_j|^q\lf\|
a_j\r\|_{X}^q\r)\\
&\leq C\sum_{k\in\zz}2^{k\alpha q}\lf(\sum_{j=k-1}^{\infty}|\lambda_j|^q\lf\|
a_j\r\|_{X}^{q/2}\r)\lf(\sum_{j=k-1}^{\infty}\lf\|
a_j\r\|_{X}^{q'/2}\r)^{q/q'}\\
&\leq C\sum_{k\in\zz}2^{k\alpha q}\lf(\sum_{j=k-1}^{\infty}|\lambda_j|^q\lf|B_j\r|^{-\alpha q/(2n)}\r)\lf(\sum_{j=k-1}^{\infty}\lf|B_j\r|^{-\alpha q'/(2n)}\r)^{q/q'}\\
&\leq C\sum_{k\in\zz}2^{k\alpha q/2}\lf(\sum_{j=k-1}^{\infty}|\lambda_j|^q\lf|B_j\r|^{-\alpha q/(2n)}\r)\\
&\leq C \sum_{j\in\zz}|\lambda_j|^q \sum_{k=-\infty}^{j+1} 2^{(k-j)\alpha q/2}\\
&\leq C \sum_{j\in\zz}|\lambda_j|^q.
\end{align*}
Combining the above estimates related to ${\rm S_1}$ and ${\rm S_2}$,  we conclude  that
$$\|T(f)\|_{\dot{\mathcal{K}}_{X}^{\alpha,\,q}(\rn)}\leq C\|f\|_{\mathcal{H}\dot{\mathcal{K}}_{X}^{\alpha,\,q}(\rn)},$$
where $C>0$ is independent of $f$. This finishes the proof Theorem \ref{t5.1}.
\end{proof}

\section{Three examples of Herz-type Hardy spaces associated with ball quasi-Banach function spaces}\label{s7}
\hskip\parindent
In this section, we apply Theorems \ref{t3.1} and \ref{t5.1} to three specific examples: Herz-type Hardy spaces with variable exponents (detailed in Subsection \ref{s4.1}), mixed Herz-type Hardy spaces (detailed in Subsection \ref{s4.2}) and  Herz-Orlicz Hardy spaces (detailed in Subsection \ref{s4.3}). These examples are all components of the family of Herz-type Hardy spaces associated with ball quasi-Banach function spaces. It is pointing out that, even in these cases, the results herein  include the corresponding results for the mixed Herz Hardy space and the Herz-type Hardy spaces with variable exponents as special cases. Moreover, Herz-Orlicz Hardy spaces have not been previously explored. Thus, the results herein  apply to Herz-Orlicz Hardy spaces for the first time.
\subsection{Herz-type Hardy spaces with variable
exponent}\label{s4.1}

In this subsection, we show that the variable Lebesgue space is a ball Banach function space. Therefore, Herz-type Hardy spaces with variable
exponent (see \cite{wl12}) belong to the family of Herz-type Hardy spaces associated with ball quasi-Banach function spaces. Moreover, the atomic decompositions of $\mathcal{H}\dot{\mathcal{K}}_{X}^{\alpha,\,q}(\mathbb{R}^n)$ (see Theorem \ref{t3.1}) and the boundedness of certain sublinear operators from $\mathcal{H}\dot{\mathcal{K}}_{X}^{\alpha,\,q}(\mathbb{R}^n)$ to $\dot{\mathcal{K}}_{X}^{\alpha,\,q}(\mathbb{R}^n)$ (see Theorem \ref{t5.1}) can be applied to the aforementioned Herz-type Hardy spaces with variable
exponent, i.e., the following Theorem \ref{ta4.1x}. To prove this, we need to recall several definitions.

For any measurable function  $p(\cdot):\mathbb{R}^n\rightarrow(0,\,\infty)$,
the {\it variable Lebesgue spaces} $L^{p(\cdot)}(\mathbb{R}^{n})$ denotes the set of
measurable functions $f$ on $\mathbb{R}^n$ such that
\begin{equation*}
\|f\|_{L^{p(\cdot)}(\rn)}:=\inf\lf\{ \lz\in(0,\,\fz):\ \int_{\mathbb{R}^n}\lf(\frac{|f(x)|}
{\lambda}\r)^{p(x)}\,dx\le 1\r\}<\infty.
\end{equation*}
More  properties about $L^{p(\cdot)}$ are referred to \cite{cf13,dhh11}.

Define \(\mathscr{P}(\rn)\) to be the set of measurable functions $p(\cdot):\mathbb{R}^n\rightarrow(0,\,\infty)$ such that
\begin{equation*}
p_{-}:=\mathop\mathrm{\,ess\,inf\,}_{x\in \mathbb{R}^n}p(x)>1 \ \ \ {\rm and} \qquad  p_{+}:=\mathop\mathrm{\,ess\,sup\,}_{x\in \mathbb{R}^n}p(x)<\infty
.
\end{equation*}
Let \(\mathscr{B}(\rn)\) be the set of \(p(\cdot) \in \mathscr{P}(\rn)\) such that the Hardy-Littlewood maximal operator $M_{\mathrm{HL}}$ is bounded on \(L^{p(\cdot)}(\rn)\).

By  \cite[Section 5.3]{wy19}, we know that $L^{p(\cdot)}(\rn)$ is a ball
quasi-Banach function space if $0<p_{-}\leq p_{+}<\infty$, and $L^{p(\cdot)}(\rn)$ is a ball
Banach function space if $1<p_{-}\leq p_{+}<\infty$. Let $p(\cdot)\in \mathscr{P}(\rn)$ and let $X=L^{p(\cdot)}(\rn)$. For any given $q\in(0,\,\infty)$ and  $\alpha\in \mathbb{R}$, $\mathcal{H}\dot{\mathcal{K}}_{X}^{\alpha,\,q}(\rn)$ is the Herz-type Hardy space with variable
exponent $\mathcal{H}\dot{\mathcal{K}}_{L^{p(\cdot)}}^{\alpha,\,q}(\rn)$ (see \cite[Definition\,2.1]{wl12}). By applying Theorems \ref{t3.1} and  \ref{t5.1} to $X=L^{p(\cdot)}(\rn)$, we obtain the following theorem.

\begin{theorem}\label{ta4.1x}
 For any \(p(\cdot) \in \mathscr{B}(\rn)\), Theorems \ref{t3.1} and \ref{t5.1} hold true with $X$ replaced by $L^{p(\cdot)}(\rn)$.
\end{theorem}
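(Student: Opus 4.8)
The plan is to verify that $L^{p(\cdot)}(\rn)$ with $p(\cdot)\in\mathscr{B}(\rn)$ satisfies all the hypotheses required by Theorems \ref{t3.1} and \ref{t5.1}, namely that it is a ball Banach function space satisfying Assumption\,1. Once this is done, the conclusions follow immediately by specializing the general theorems. First I would recall from \cite[Section 5.3]{wy19} that $L^{p(\cdot)}(\rn)$ is a ball Banach function space whenever $1<p_-\le p_+<\infty$; since $p(\cdot)\in\mathscr{B}(\rn)\subset\mathscr{P}(\rn)$ forces precisely $1<p_-\le p_+<\infty$, this part is automatic. It remains to check Assumption\,1, i.e., the boundedness of the Hardy--Littlewood maximal operator $M_{\mathrm{HL}}$ on both $X=L^{p(\cdot)}(\rn)$ and its associate space $X'$.

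The boundedness of $M_{\mathrm{HL}}$ on $L^{p(\cdot)}(\rn)$ is exactly the defining condition of the class $\mathscr{B}(\rn)$, so \eqref{e3.2} holds. For the associate space, the key fact is that $(L^{p(\cdot)})'=L^{p'(\cdot)}$ (up to equivalence of norms), where $p'(\cdot)$ is the pointwise conjugate exponent defined by $1/p(x)+1/p'(x)=1$; this identification of the K\"othe dual is standard in the theory of variable Lebesgue spaces (see \cite{cf13,dhh11}). Then I would invoke the duality theorem for the variable maximal operator: $M_{\mathrm{HL}}$ is bounded on $L^{p(\cdot)}(\rn)$ if and only if it is bounded on $L^{p'(\cdot)}(\rn)$, valid when $p_->1$ (this is a well-known result of Diening; see \cite[Theorem 4.5.7]{dhh11} or \cite{cf13}). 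Hence $M_{\mathrm{HL}}$ is bounded on $X'=L^{p'(\cdot)}(\rn)$, which is the second inequality in Assumption\,1.

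With Assumption\,1 verified, Theorem \ref{t3.1} applies verbatim with $X=L^{p(\cdot)}(\rn)$, giving the atomic decomposition $\mathcal{H}\dot{\mathcal{K}}_{L^{p(\cdot)}}^{\alpha,\,q}(\rn)=\mathcal{H}\dot{\mathcal{K}}^{\alpha,\,s,\,q}_{L^{p(\cdot)},\,\mathrm{atom}}(\rn)$ with equivalent quasi-norms for $\alpha\in[n\delta_{X'},\infty)$ and $s\in[(\alpha-n\delta_{X'}),\infty)\cap\zz_+$; similarly Theorem \ref{t5.1} yields the boundedness of the relevant sublinear operators from $\mathcal{H}\dot{\mathcal{K}}_{L^{p(\cdot)}}^{\alpha,\,q}(\rn)$ to $\dot{\mathcal{K}}_{L^{p(\cdot)}}^{\alpha,\,q}(\rn)$. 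I do not anticipate a serious obstacle here; the only point requiring care is the duality step for the maximal operator on the associate space, which must be cited precisely rather than reproved, together with the identification $(L^{p(\cdot)})'\approx L^{p'(\cdot)}$. One should also remark, as in \cite[Remark 2.10]{w22}, that $L^{p(\cdot)}(\rn)$ has absolutely continuous norm under these conditions, so $X'=X^*$ holds as well, which is needed if one wishes to additionally invoke Lemma \ref{l3.4x'} or Proposition \ref{p2.7}; but for Theorems \ref{t3.1} and \ref{t5.1} themselves only Assumption\,1 is needed.
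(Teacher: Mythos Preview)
Your proposal is correct and follows essentially the same approach as the paper: verify that $L^{p(\cdot)}(\rn)$ is a ball Banach function space and that Assumption\,1 holds by identifying $(L^{p(\cdot)})'=L^{p'(\cdot)}$ and invoking the duality of the maximal operator. The only cosmetic difference is the citation for the duality step---the paper uses \cite[Theorem 1.2]{c06} to conclude $p'(\cdot)\in\mathscr{B}(\rn)$, whereas you cite Diening's result via \cite{dhh11,cf13}---but these are the same theorem.
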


\begin{remark}
   Let \(p(\cdot) \in \mathscr{B}(\rn)\). When $X=L^{p(\cdot)}(\rn)$,  for any  $q\in(0,\,\infty)$ and  $\alpha\in \mathbb{R}$, we have  $\mathcal{H}\dot{\mathcal{K}}_{X}^{\alpha,\,q}(\rn)=\mathcal{H}\dot{\mathcal{K}}_{L^{p(\cdot)}}^{\alpha,\,q}(\rn)$. In this case,  Theorems \ref{t3.1} and \ref{t5.1} coincide with  \cite[Theorems 2.1 and 2.2]{wl12}.
\end{remark}

\begin{proof}[Proof of Theorem \ref{ta4.1x}]
Let \(p(\cdot) \in \mathscr{B}(\rn)\). Since the space $L^{p(\cdot)}(\rn)$ is a ball Banach function space, it suffices to prove that Assumption 1 holds true for $L^{p(\cdot)}(\rn)$. By \cite[Proposition 2.37]{caf13}, we obtain that the associate space of $L^{p(\cdot)}(\rn)$ is
 $L^{p'(\cdot)}(\rn)$, where $1/p(\cdot)+1/p'(\cdot)=1$. By \cite[Theorem 1.2]{c06}, we obtain that $p'(\cdot) \in \mathscr{B}(\rn)$.  Thus, the Hardy-Littlewood maximal operator $M_{\mathrm{HL}}$ is bounded on $L^{p(\cdot)}(\rn)$  and $L^{p'(\cdot)}(\rn)$. This completes the proof of Theorem \ref{ta4.1x}.
\end{proof}

\subsection{Mixed Herz-Hardy spaces}\label{s4.2}
In this subsection, we show that the mixed-norm Lebesgue space is a ball Banach function space. Therefore, mixed Herz-Hardy spaces (see \cite{z22}) belong to the family of Herz-type Hardy spaces associated with ball quasi-Banach function spaces. Moreover, the atomic decompositions of $\mathcal{H}\dot{\mathcal{K}}_{X}^{\alpha,\,q}(\mathbb{R}^n)$ (see Theorem \ref{t3.1}) and the boundedness of certain sublinear operators from $\mathcal{H}\dot{\mathcal{K}}_{X}^{\alpha,\,q}(\mathbb{R}^n)$ to $\dot{\mathcal{K}}_{X}^{\alpha,\,q}(\mathbb{R}^n)$ (see Theorem \ref{t5.1}) can be applied to the aforementioned mixed Herz-Hardy spaces, i.e., the following Theorem \ref{ta4.4x}. To prove this, we need to recall several definitions.

The theory of mixed-norm function spaces can be traced back
to the work of Benedek and Panzone \cite{b61}.
 Later on, in 1970, Lizorkin \cite{l70}
further developed both the theory of multipliers of Fourier integrals and estimates of
convolutions in the mixed-norm Lebesgue spaces.  In recent years, inspired by \cite{b61} and \cite{h60}, the theory of mixed-norm
function spaces,  including mixed-norm Morrey spaces, mixed-norm Hardy spaces,
mixed-norm Besov spaces and mixed-norm Triebel-Lizorkin spaces, has rapidly been developed(see, for instance \cite{c17,g17,h19,n19}). Now, we present the definition of mixed-norm Lebesgue spaces from \cite{b61} as
follows.
\begin{definition}
 Let $\vec{p} := (p_1,\cdot\cdot\cdot, p_n) \in(0,\infty)^n$. The mixed-norm Lebesgue space
$L^{\vec{p}}(\rn)$ is defined to be the set of all measurable functions $f$ such that
$$\|f\|_{L^{\vec{p}}(\rn)}:=\lf\{\int_{\rr}\cdot\cdot\cdot\lf[\int_{\rr}
|f(x_1,\cdot\cdot\cdot,x_n)|^{p_1}dx_{1}\r]^{p_2/p_1}\cdot\cdot\cdot dx_{n}\r\}^{1/p_n}<\infty$$
with the usual modifications made when $p_i=\infty$ for some $i\in \{1,\cdot\cdot\cdot, n\}$.
\end{definition}

From \cite[p.9]{s17} and \cite[Section. 5]{wy19}, we know that $L^{\vec{p}}(\rn)$ with $\vec{p} := (p_1,\cdot\cdot\cdot, p_n) \in(0,\infty)^n$ is a ball
quasi-Banach function space. Furthermore, it is easy to see that $L^{\vec{p}}(\rn)$ is a ball Banach function space if $\vec{p} = (p_1,\cdot\cdot\cdot, p_n) \in(1,\infty)^n$.
Let $X=L^{\vec{p}}(\rn)$ with $\vec{p} = (p_1,\cdot\cdot\cdot, p_n) \in(1,\infty)^n$.  For any given $q\in(0,\,\infty)$ and  $\alpha\in \mathbb{R}$, $\mathcal{H}\dot{\mathcal{K}}_{X}^{\alpha,\,q}(\rn)$  is the mixed Herz-Hardy space $\mathcal{H}\dot{\mathcal{K}}_{L^{\vec{p}}}^{\alpha,\,q}(\rn)$  (see \cite[Definition\,2.4]{z22}), respectively.  By applying Theorems \ref{t3.1} and  \ref{t5.1} to $X=L^{\vec{p}}(\rn)$, we obtain the following theorem.

\begin{theorem}\label{ta4.4x}
 For any  $\vec{p} = (p_1,\cdot\cdot\cdot, p_n) \in(1,\infty)^n$, Theorems \ref{t3.1} and \ref{t5.1} hold true with $X$ replaced by $L^{\vec{p}}(\rn)$.
\end{theorem}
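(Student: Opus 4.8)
The plan is to reduce Theorem \ref{ta4.4x} entirely to Theorems \ref{t3.1} and \ref{t5.1} by verifying their sole hypothesis, namely that $X = L^{\vec p}(\rn)$ with $\vec p = (p_1,\dots,p_n) \in (1,\infty)^n$ is a ball Banach function space satisfying Assumption\,1. The fact that $L^{\vec p}(\rn)$ is a ball Banach function space in this range is already recorded above (from \cite[p.9]{s17} and \cite[Section 5]{wy19}, together with the elementary observation that Minkowski's integral inequality in each variable, valid since each $p_i > 1$, yields the triangle inequality and the local integrability condition). So the real content is Assumption\,1: boundedness of the Hardy--Littlewood maximal operator $M_{\mathrm{HL}}$ on $L^{\vec p}(\rn)$ and on its associate space.

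First I would identify the associate space. For mixed-norm Lebesgue spaces one has $\big(L^{\vec p}(\rn)\big)' = L^{\vec p\,'}(\rn)$, where $\vec p\,' := (p_1',\dots,p_n')$ and $1/p_i + 1/p_i' = 1$ for each $i$; this is the iterated H\"older duality of Benedek--Panzone \cite{b61}, and since each $p_i \in (1,\infty)$ we again have $\vec p\,' \in (1,\infty)^n$. Thus it suffices to prove the single statement that $M_{\mathrm{HL}}$ is bounded on $L^{\vec r}(\rn)$ for every $\vec r \in (1,\infty)^n$, and then apply it to both $\vec r = \vec p$ and $\vec r = \vec p\,'$. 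This boundedness is a known result; one can cite, e.g., the mixed-norm Fefferman--Stein inequality in \cite{h19} or the direct argument in \cite{z22}, but if a self-contained reference is wanted, the standard route is to iterate the one-dimensional weighted maximal inequality: bounding $M_{\mathrm{HL}}$ on $\rn$ above by a composition of one-dimensional maximal operators $M^{(1)}\circ\cdots\circ M^{(n)}$ acting in each coordinate (a pointwise estimate), and then applying the classical $L^{p}$-boundedness of the one-dimensional maximal function successively, peeling off the norms from the inside out using Minkowski's inequality where the exponents force an exchange of the order of integration and the norm. Since every $p_i>1$, each step is licit, giving $\|M_{\mathrm{HL}} f\|_{L^{\vec p}} \lesssim \|f\|_{L^{\vec p}}$, and identically $\|M_{\mathrm{HL}} f\|_{L^{\vec p\,'}} \lesssim \|f\|_{L^{\vec p\,'}}$.

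Having verified Assumption\,1 for $X = L^{\vec p}(\rn)$, I would then simply invoke Theorems \ref{t3.1} and \ref{t5.1} verbatim: the atomic decomposition $\mathcal{H}\dot{\mathcal{K}}_{L^{\vec p}}^{\alpha,\,q}(\rn) = \mathcal{H}\dot{\mathcal{K}}^{\alpha,\,s,\,q}_{L^{\vec p},\,\mathrm{atom}}(\rn)$ with equivalent quasi-norms, and the boundedness of sublinear operators $T$ satisfying conditions (i) and (ii) from $\mathcal{H}\dot{\mathcal{K}}_{L^{\vec p}}^{\alpha,\,q}(\rn)$ to $\dot{\mathcal{K}}_{L^{\vec p}}^{\alpha,\,q}(\rn)$, for $q \in (0,\infty)$, $\alpha \in [n\delta_{X'},\infty)$ and $s \in [(\alpha - n\delta_{X'}),\infty)\cap\zz_+$, where now $\delta_{X'} = \delta_{(L^{\vec p})'}$ is the exponent furnished by Lemma \ref{l3.1x} applied to the associate space. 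Nothing further is needed, as the general theorems are stated precisely so that their hypotheses are exactly ``ball Banach function space $+$ Assumption\,1''.

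The only genuine obstacle is the maximal-function boundedness on the mixed-norm space and, in tandem, the correct identification of the associate space; everything else is bookkeeping. In practice one expects to cite these two facts from the literature (\cite{b61} for the duality, and \cite{h19} or \cite{z22} for the maximal inequality) rather than reprove them, so the proof of Theorem \ref{ta4.4x} will be short: state that $L^{\vec p}(\rn)$ is a ball Banach function space, record $\big(L^{\vec p}\big)' = L^{\vec p\,'}$, note $\vec p\,' \in (1,\infty)^n$, conclude that $M_{\mathrm{HL}}$ is bounded on both $L^{\vec p}(\rn)$ and $L^{\vec p\,'}(\rn)$ so that Assumption\,1 holds, and apply Theorems \ref{t3.1} and \ref{t5.1}.
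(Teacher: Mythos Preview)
Your proposal is correct and follows essentially the same route as the paper: verify that $L^{\vec p}(\rn)$ is a ball Banach function space, identify its associate space as $L^{\vec p\,'}(\rn)$ via \cite{b61}, cite the boundedness of $M_{\mathrm{HL}}$ on mixed-norm Lebesgue spaces from \cite{h19}, and then invoke Theorems \ref{t3.1} and \ref{t5.1}. The only difference is that you additionally sketch the iterated one-dimensional maximal argument, which the paper omits in favor of a direct citation.
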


\begin{remark}
  Let $\vec{p} = (p_1,\cdot\cdot\cdot, p_n) \in(1,\infty)^n$.  When $X:=L^{\vec{p}}(\rn)$,  for any  $q\in(0,\,\infty)$ and $\alpha\in \mathbb{R}$, we have  $\mathcal{H}\dot{\mathcal{K}}_{X}^{\alpha,\,q}(\rn)=\mathcal{H}\dot{\mathcal{K}}_{L^{\vec{p}}}^{\alpha,\,q}(\rn)$. In this case,  Theorems \ref{t3.1} and \ref{t5.1} coincide with  \cite[ Theorems 3.1 and 5.1]{z22}.
\end{remark}

\begin{proof}[Proof of Theorem \ref{ta4.4x}]
Let $\vec{p} = (p_1,\cdot\cdot\cdot, p_n) \in(1,\infty)^n$. Since the space $L^{\vec{p}}(\rn)$ is a ball Banach function space, it suffices to prove that Assumption 1 holds true for $L^{\vec{p}}(\rn)$. By \cite[Theorem\,1]{b61}, we obtain that the associate space of $L^{\vec{p}}(\rn)$ is
 $L^{{\vec{p}}\,'}(\rn)$, where $\vec{p\,}^{\prime}:=(p_1^{\prime},\ldots, p_n^{\prime})$ with $1/p_i+1/p_i^{\prime}=1$ for any $i\in\{1,\ldots,n\}$.  Thus, by \cite[Lemma\,3.5]{h19}, we obtain that the Hardy-Littlewood maximal operator $M_{\mathrm{HL}}$ is bounded on $L^{\vec{p}}(\rn)$  and $L^{{\vec{p}}\,'}(\rn)$. This completes the proof of Theorem \ref{ta4.4x}.
\end{proof}

\subsection{ Herz-Orlicz Hardy spaces}\label{s4.3}

In this subsection, we introduce a new Herz-type Hardy space: {\it{Herz-Orlicz Hardy space}},  which belongs to the family of Herz-type Hardy spaces associated with ball quasi-Banach function spaces. We present  that the Orlicz function space is also a ball Banach function space. Consequently,  the atomic decompositions of $\mathcal{H}\dot{\mathcal{K}}_{X}^{\alpha,\,q}(\mathbb{R}^n)$ (see Theorem \ref{t3.1}) and the boundedness of certain sublinear operators from $\mathcal{H}\dot{\mathcal{K}}_{X}^{\alpha,\,q}(\mathbb{R}^n)$ to $\dot{\mathcal{K}}_{X}^{\alpha,\,q}(\mathbb{R}^n)$ (see Theorem \ref{t5.1}) can be applied to the aforementioned  Herz-Orlicz Hardy spaces , i.e., the following Theorem \ref{t4.9ax}. To prove this, we need to recall several key definitions and lemmas.

 We first recall Orlicz spaces. Note that there are many operators that are not bounded on Lebesgue spaces \(L^{p}(\mathbb{R}^{n})\)  with \(p \in[1, \infty]\) especially on \(L^{1}(\mathbb{R}^{n})\)  and \(L^{\infty}(\mathbb{R}^{n})\). Orlicz spaces are used to cover the failure of the boundedness of some integral operators. For example, the Hardy-Littlewood maximal operator $M_{\mathrm{HL}}$ fails to be bounded on \(L^{1}(\mathbb{R}^{n})\) but bounded from \(L^{1}(\mathbb{R}^{n})\) to \(L^{1}(log L)^{1+\varepsilon}(\mathbb{R}^{n})\)  with \(\varepsilon \in(0, \infty)\), where \(L^{1}(log L)^{1+\varepsilon}(\mathbb{R}^{n})\)  with \(\varepsilon \in(0, \infty)\) is a special Orlicz space. Consequently,  the Orlicz spaces have attracted significant attention, owing to their  finer and subtler structures.  For more study of Orlicz spaces, we
refer the reader to \cite{ha19,l11,r91}.

 Recall that a function \(\Phi:[0, \infty) \to [0, \infty)\) is called an {\it Orlicz function} if it is non-decreasing and satisfies \(\Phi(0)=0\) , \(\Phi(t)>0\) whenever \(t \in(0, \infty)\), and \(\lim _{t \to \infty} \Phi(t)=\infty.\) An Orlicz function $\Phi$ is said to be of  positive lower (resp, upper) type $p$ with \(p \in(0, \infty)\) if there exists a positive constant \(C_{p}\), depending on $p$, such that, for any \(t \in[0, \infty)\) and \(s \in(0,1)\) (resp., \(s \in[1, \infty))\),
\[\Phi(s t) \leq C_{p} s^{p} \Phi(t).\]
A convex left-continuous Orlicz function is called a {\it Young function}.  The following definition of  Orlicz space is from \cite{r91}.
\begin{definition}
 Let $\Phi$ be an Orlicz function with positive lower type \(p_{\Phi}^{-}\) and positive upper type \(p_{\Phi}^{+}\). {\it The Orlicz space} \(L^{\Phi}(\mathbb{R}^{n})\) is defined to be the set of all measurable functions $f$ on \(\mathbb{R}^{n}\) such that
\[\| f\| _{L^{\Phi}\left(\mathbb{R}^{n}\right)}:=\inf \left\{\lambda \in(0, \infty): \int_{\mathbb{R}^{n}} \Phi\left(\frac{|f(x)|}{\lambda}\right) d x \leq 1\right\}<\infty .\]
\end{definition}
As demonstrated in  \cite[Section 7.6]{s17},  when $\Phi$ is an Orlicz function with positive lower type \(p_{\Phi}^{-}\in[1,\,\infty)\) and positive upper type \(p_{\Phi}^{+}\),  the space \(L^{\Phi}(\mathbb{R}^{n})\) is a ball Banach function space. Now, let $\Phi$ be a Young function satisfying the conditions of positive lower type \(p_{\Phi}^{-}\in[1,\,\infty)\) and positive upper type \(p_{\Phi}^{+}\). By setting $X=L^{\Phi}(\mathbb{R}^{n})$ in Definition \ref{da2.7},  a novel {\it Herz-Orlicz Hardy space} $\mathcal{H}\dot{\mathcal{K}}_{L^{\Phi}}^{\alpha,\,q}(\rn)$ is thereby established.  Applying Theorems \ref{t3.1} and  \ref{t5.1} to $X=L^{\Phi}(\mathbb{R}^{n})$, we obtain the atomic decompositions of $\mathcal{H}\dot{\mathcal{K}}_{L^{\Phi}}^{\alpha,\,q}(\rn)$  and the boundedness of certain sublinear operators from $\mathcal{H}\dot{\mathcal{K}}_{L^{\Phi}}^{\alpha,\,q}(\rn)$ to $\dot{\mathcal{K}}_{L^{\Phi}}^{\alpha,\,q}(\mathbb{R}^n)$.
\begin{theorem}\label{t4.9ax}
Let $\Phi$ be a Young function with positive lower type \(p_{\Phi}^{-}\in(1,\,\infty)\) and positive upper type \(p_{\Phi}^{+}<\infty\). Then, Theorems \ref{t3.1} and \ref{t5.1} hold true with $X$ replaced by \(L^{\Phi}(\mathbb{R}^{n})\).
\end{theorem}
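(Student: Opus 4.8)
The plan is to verify that $L^\Phi(\rn)$ satisfies all the hypotheses of Theorems \ref{t3.1} and \ref{t5.1}, so that the conclusions follow directly by specializing $X=L^\Phi(\rn)$. Since the excerpt already records that $L^\Phi(\rn)$ is a ball Banach function space when $\Phi$ has positive lower type $p_\Phi^-\in[1,\infty)$ and positive upper type $p_\Phi^+$, the only thing left to check is Assumption\,1, namely the boundedness of $M_{\mathrm{HL}}$ on both $L^\Phi(\rn)$ and its associate space $(L^\Phi)'(\rn)$.

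First I would identify the associate space. It is classical Orlicz-space duality that $(L^\Phi)'(\rn)=L^{\Phi^*}(\rn)$, where $\Phi^*$ is the complementary (conjugate) Young function of $\Phi$; I would cite \cite{r91} (or the relevant place in \cite{s17}) for this identification together with the equivalence of the associate norm and the Orlicz norm of $\Phi^*$. Next I would record the standard fact that, when $\Phi$ is a Young function of positive lower type $p_\Phi^-\in(1,\infty)$ (so that, in particular, $\Phi$ satisfies the $\nabla_2$-condition and $\Phi^*$ satisfies the $\Delta_2$-condition) and positive upper type $p_\Phi^+<\infty$ (so that $\Phi$ satisfies $\Delta_2$ and $\Phi^*$ has positive lower type $(p_\Phi^+)'>1$), the Hardy-Littlewood maximal operator is bounded on $L^\Phi(\rn)$; this is a well-known consequence of the lower type condition $p_\Phi^->1$ — see, e.g., the discussion of maximal operators on Orlicz spaces in \cite{l11} or \cite{ha19}. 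Applying the same statement to the complementary function $\Phi^*$, whose lower type exceeds $1$ because $p_\Phi^+<\infty$, yields the boundedness of $M_{\mathrm{HL}}$ on $L^{\Phi^*}(\rn)=(L^\Phi)'(\rn)$. Together these two facts are exactly the two inequalities in Assumption\,1.

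With Assumption\,1 verified for $X=L^\Phi(\rn)$, Theorem \ref{t3.1} gives the atomic decomposition of $\mathcal{H}\dot{\mathcal{K}}_{L^\Phi}^{\alpha,\,q}(\rn)$ and Theorem \ref{t5.1} gives the boundedness of the sublinear operators from $\mathcal{H}\dot{\mathcal{K}}_{L^\Phi}^{\alpha,\,q}(\rn)$ to $\dot{\mathcal{K}}_{L^\Phi}^{\alpha,\,q}(\rn)$, under the stated ranges $\alpha\in[n\delta_{X'},\infty)$ and $s\in[(\alpha-n\delta_{X'}),\infty)\cap\zz_+$; here $\delta_{X'}$ is the constant furnished by Lemma \ref{l3.1x} applied to the associate space $L^{\Phi^*}(\rn)$, which exists precisely because $M_{\mathrm{HL}}$ is bounded on $L^{\Phi^*}(\rn)$. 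This completes the proof. The main obstacle is purely bibliographic rather than mathematical: one must pin down the precise statements in the Orlicz-space literature relating (a) the associate space of $L^\Phi$ to $L^{\Phi^*}$ and (b) the lower/upper type conditions to the $\Delta_2/\nabla_2$ conditions and hence to the boundedness of $M_{\mathrm{HL}}$ — in particular making sure the hypothesis $p_\Phi^-\in(1,\infty)$ (strict, rather than the $[1,\infty)$ needed merely for the ball-Banach-function-space property) is exactly what is required to run both halves of Assumption\,1.
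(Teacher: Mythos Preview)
Your proposal is correct and follows essentially the same approach as the paper: reduce to verifying Assumption\,1 by identifying $(L^\Phi)'=L^{\Phi^*}$ and then using the lower/upper type hypotheses on $\Phi$ (which dualize to $\Phi^*$ via Lemma~\ref{axl4.8}) to obtain the boundedness of $M_{\mathrm{HL}}$ on both $L^\Phi(\rn)$ and $L^{\Phi^*}(\rn)$. The paper's proof differs only in the specific references invoked (it cites \cite[Theorem 3.4.6]{ha19} for the associate-space identification and \cite[Corollary 2.1.2]{y17} for the maximal inequality), not in the logical structure.
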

To prove Theorem \ref{t4.9ax}, we need
following the definition of complementary function to introduce the associate spaces of the Orlicz spaces.
\begin{definition}
 Let $\Phi$ be a Young function. {\it The complementary function} of $\Phi$ is defined as
\[\Phi^{*}(t)=\sup \{s t-\Phi(s): s \in[0, \infty)\}, \ \ \ t \geq 0 .\]
\end{definition}
The complementary function of $\Phi$ is also called as the conjugate function of $\Phi$  (see \cite[Section 2.4]{ha19}). If $\Phi$ is a Young function, then the complementary function of \(\Phi^{*}\) is $\Phi$ (detailed in \cite[Proposition 2.4.5]{ha19}).
\begin{lemma}{\rm\cite[Proposition 2.4.9]{ha19}}\label{axl4.8}
Let  $\Phi$ be a Young function and p, \(q \in(1, \infty)\). Then
 \begin{enumerate}
\item[\rm{(i)}]
 $\Phi$ is of lower type p if and only if \(\Phi^{*}\) is of upper type \(p'\);
 \item[\rm{(ii)}]
 $\Phi$ is of upper type q if and only if \(\Phi^{*}\) is of lower type \(q'\).
 \end{enumerate}
\end{lemma}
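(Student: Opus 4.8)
The plan is to recast both type conditions as almost-monotonicity of power-normalized ratios and then transfer this property through the generalized inverse functions, exploiting the duality between $\Phi$ and $\Phi^{*}$ at the level of inverses. First I would record the elementary reformulation. Putting $t_1=st$ and $t_2=t$ shows that $\Phi$ is of lower type $p$ precisely when the ratio $t\mapsto\Phi(t)/t^{p}$ is almost increasing, i.e. there is a constant $C\ge1$ with $\Phi(t_1)/t_1^{p}\le C\,\Phi(t_2)/t_2^{p}$ for all $0<t_1\le t_2$; symmetrically, $\Phi$ is of upper type $q$ precisely when $t\mapsto\Phi(t)/t^{q}$ is almost decreasing. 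The same equivalences hold verbatim for $\Phi^{*}$ with the conjugate exponents $p'$ and $q'$. Next, performing the monotone change of variables $t_i=\Phi^{-1}(\tau_i)$ turns almost-increase of $\Phi(t)/t^{p}$ into almost-\emph{decrease} of $\Phi^{-1}(\tau)/\tau^{1/p}$, and likewise almost-decrease of $\Phi(t)/t^{q}$ into almost-increase of $\Phi^{-1}(\tau)/\tau^{1/q}$; these are purely algebraic rearrangements once the change of variables through the inverse is justified.

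The heart of the argument is the classical duality estimate for Young functions,
\[
\tau\le \Phi^{-1}(\tau)\,(\Phi^{*})^{-1}(\tau)\le 2\tau,\qquad \tau\in[0,\infty),
\]
whose upper bound follows from Young's inequality $uv\le\Phi(u)+\Phi^{*}(v)$ (itself immediate from the defining supremum of $\Phi^{*}$) and whose lower bound follows by testing that same supremum at the point realizing $\Phi^{-1}(\tau)$ (see \cite{ha19}). Consequently $(\Phi^{*})^{-1}(\tau)$ is comparable to $\tau/\Phi^{-1}(\tau)$, so that
\[
\frac{(\Phi^{*})^{-1}(\tau)}{\tau^{1/p'}}\sim \frac{\tau^{1/p}}{\Phi^{-1}(\tau)}=\lf(\frac{\Phi^{-1}(\tau)}{\tau^{1/p}}\r)^{-1}.
\]
Since taking reciprocals interchanges almost-increase and almost-decrease, and since multiplication by the bounded factors hidden in $\sim$ preserves both, I would conclude as follows: if $\Phi$ is of lower type $p$, then $\Phi^{-1}(\tau)/\tau^{1/p}$ is almost decreasing, hence $(\Phi^{*})^{-1}(\tau)/\tau^{1/p'}$ is almost increasing, and translating back through the inverse gives that $\Phi^{*}(\tau)/\tau^{p'}$ is almost decreasing, i.e. $\Phi^{*}$ is of upper type $p'$. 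Flipping every monotonicity along this same chain yields the companion implication that upper type $q$ of $\Phi$ forces lower type $q'$ of $\Phi^{*}$.

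Finally, the two ``only if'' halves come for free from biconjugation, recalled just above the statement. Since $(\Phi^{*})^{*}=\Phi$, applying the companion implication to the Young function $\Phi^{*}$ with $q=p'$ shows that $\Phi^{*}$ of upper type $p'$ forces $(\Phi^{*})^{*}=\Phi$ to be of lower type $(p')'=p$, which is exactly the converse in (i); applying the first implication to $\Phi^{*}$ with $p=q'$ gives the converse in (ii). Thus all four implications reduce to the single transfer mechanism above together with the involution $(\Phi^{*})^{*}=\Phi$.

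I expect the main obstacle to be the careful handling of the generalized (right-continuous) inverses when $\Phi$ is neither strictly increasing nor continuous: the change-of-variables identities and the comparison $\Phi^{-1}(\tau)(\Phi^{*})^{-1}(\tau)\sim\tau$ then hold only up to absolute multiplicative constants, and one must check that every such constant is bounded and independent of $\tau$ so that the successive ``almost-monotone'' conclusions propagate intact. This bookkeeping of constants, rather than any single inequality, is where I would spend the most care.
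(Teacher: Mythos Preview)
The paper does not supply its own proof of this lemma; it is quoted verbatim from \cite[Proposition~2.4.9]{ha19} and used as a black box in the proof of Theorem~\ref{t4.9ax}. There is therefore nothing in the paper to compare against.

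That said, your argument is correct and is in fact the standard route taken in \cite{ha19}: translate the type conditions into almost-monotonicity of $\Phi(t)/t^{p}$, pass to the (generalized) inverse to get almost-monotonicity of $\Phi^{-1}(\tau)/\tau^{1/p}$, invoke the fundamental comparability $\Phi^{-1}(\tau)(\Phi^{*})^{-1}(\tau)\sim\tau$ to transfer this to $(\Phi^{*})^{-1}$, and then undo the inverse. The closing appeal to $(\Phi^{*})^{*}=\Phi$ for the converses is also exactly how the biconditional is completed there. Your caveat about the generalized right-continuous inverse is well placed: in \cite{ha19} the inverse is defined as $\Phi^{-1}(\tau)=\inf\{t:\Phi(t)\ge\tau\}$, and one has $\Phi(\Phi^{-1}(\tau))\le\tau$ and $\Phi^{-1}(\Phi(t))\le t$ with possible strict inequality, so the change of variables introduces harmless absolute constants rather than exact identities --- precisely the bookkeeping you flag.
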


\begin{proof}[Proof of Theorem \ref{t4.9ax}]
Let $\Phi$ be an Orlicz function with positive lower type \(p_{\Phi}^{-}\in(1,\,\infty)\) and positive upper type \(p_{\Phi}^{+}<\infty\). Since the space \(L^{\Phi}(\mathbb{R}^{n})\) is a ball Banach function space, it suffices to prove that Assumption 1 holds true for $L^{\Phi}(\mathbb{R}^{n})$. By \cite[Theorem\,3.4.6]{ha19}, we obtain that the associate space of ${L^{\Phi}}(\rn)$ is
 ${L^{\Phi^{*}}}(\rn)$. From Lemma \ref{axl4.8}, it follows that when $\Phi$ is a Young function with positive lower type \(p_{\Phi}^{-}\in(1,\,\infty)\) and positive upper type \(p_{\Phi}^{+}<\infty\),
 \(\Phi^{*}\) is also a Young function with positive lower type \(\lf(p_{\Phi}^{+}\r)'\in(1,\,\infty)\) and positive upper type \(\lf(p_{\Phi}^{-}\r)'<\infty\). Thus, by \cite[Corollary 2.1.2]{y17}, we obtain that the Hardy-Littlewood maximal operator $M_{\mathrm{HL}}$ is bounded on $L^{\Phi}(\mathbb{R}^{n})$ and ${L^{\Phi^{*}}}(\rn)$. This completes the proof of Theorem \ref{t4.9ax}.
\end{proof}


\bigskip

\noindent Aiting Wang and Baode Li \\
\noindent College of Mathematics and System Sciences, Xinjiang University\\
Urumqi,
830017, Xinjiang, China\\
\noindent{E-mail }:\\
atwangmath@163.com (Aiting Wang)\\
baodeli@xju.edu.cn (Baode Li)
\medskip

\noindent Wenhua Wang\\
\noindent
College of Mathematics and Statistics, Wuhan University\\
Luo-Jia-Shan, Wuhan
430072, Hubei, China\\
\noindent{E-mail }:whwangmath@whu.edu.cn

\medskip
\noindent Mingquan Wei\\
\noindent
Xinyang Normal University\\
Xinyang, 464000, Henan, China\\
\noindent{E-mail }:weimingquan11@mails.ucas.ac.cn

\bigskip \medskip

\end{document}